\newcommand{\red}{}
\newtheorem{Remark}{Remark}
\numberwithin{Theorem}{section}
\numberwithin{Definition}{section}
\numberwithin{Lemma}{section}
\numberwithin{Algorithm}{section}
\numberwithin{equation}{section}
\newcommand{\Tr}{\,\text{Tr}}
\newcommand{\blue}{}
\renewcommand{\thealgorithm}{IP--PMM}
\begin{document}
	\title{A New Preconditioning Approach \\ for an Interior Point--Proximal Method of Multipliers \\ for Linear and Convex Quadratic Programming}
\author[1]{Luca Bergamaschi*}
\author[2]{Jacek Gondzio}
\author[3]{\'Angeles Mart\'{\i}nez}
\author[2]{John W. Pearson}
\author[2]{Spyridon Pougkakiotis}
\authormark{A New Preconditioning Approach for Linear and Convex Quadratic Programming}

\address[1]{\orgdiv{Department of Civil Environmental and Architectural Engineering}, \orgname{University of Padova}, \orgaddress{\country{Italy}}}

\address[2]{\orgdiv{School of Mathematics}, \orgname{University of Edinburgh}, \orgaddress{ \country{UK}}}

\address[3]{\orgdiv{Department of Mathematics and Earth Sciences}, \orgname{University of Trieste}, 
\orgaddress{\country{Italy}}}

\corres{*Luca Bergamaschi, 
Department of Civil Environmental and Architectural Engineering, University of Padova,
Via Marzolo 9, 35100 Padova,  Italy. \email{luca.bergamaschi@unipd.it}}

				\abstract[Summary]{
					In this paper, we address the efficient numerical solution
					of linear and quadratic programming problems, often of large scale.
					With this aim, we devise an infeasible interior point method, blended with 
					the proximal method of multipliers, which in turn
					results in a primal-dual regularized interior point method.
					Application of this method gives rise to a
					sequence of increasingly ill-conditioned 
					linear systems which cannot always be solved
					by factorization methods, due to memory and CPU time restrictions. 
					We propose a novel preconditioning strategy which is based
					on a suitable sparsification of the normal equations matrix in the linear case,
					and also constitutes the foundation of a block-diagonal preconditioner to accelerate
					MINRES for linear systems arising from the solution of general quadratic programming problems.
					Numerical results for a range of test problems 
					demonstrate the robustness of the proposed preconditioning strategy, 
					together with its ability to solve linear systems of very large dimension.}
\keywords{Interior Point Method, Proximal Method of Multipliers, Krylov subspace methods, Preconditioning, BFGS update} 

%

\maketitle

\section{Introduction}
\par In this paper, we consider linear and quadratic programming (LP and QP) problems of the following form:
\begin{equation}
\label{LPQP} \min_{x} \ \big( c^Tx + \frac{1}{2}x^T Q x \big), \ \ \text{s.t.}  \  Ax = b,   \ x \geq 0,
\end{equation}
\noindent where $c,x \in \mathbb{R}^n$, $b \in \mathbb{R}^m$, $A \in \mathbb{R}^{m\times n}$. For quadratic programming problems we have that $Q \succeq 0 \in \mathbb{R}^{n \times n}$, while for linear programming $Q=0$. The problem \eqref{LPQP} is often referred to as the primal form of the quadratic programming problem; the dual form of the problem is given by
\begin{equation}
\label{Dual} \text{max}_{x,y,z}  \ \big(b^Ty - \frac{1}{2}x^T Q x\big), \ \ \text{s.t.}\  -Qx + A^Ty + z = c,\ z\geq 0,
\end{equation}
\noindent where $z \in \mathbb{R}^n$, $y \in \mathbb{R}^m$. Problems of linear or quadratic programming form are fundamental problems in optimization, and arise in a wide range of scientific applications.

\par A variety of optimization methods exist for solving the problem \eqref{LPQP}. Two popular and successful approaches are interior point methods (IPMs) and proximal methods of multipliers (PMMs). Within an IPM, a Lagrangian is constructed involving the objective function and the equality constraints of \eqref{LPQP}, to which a logarithmic barrier function is then added in place of the inequality constraints. Hence, a logarithmic barrier sub-problem is solved at each iteration of the algorithm (see \cite{EJOR-Gon} for a survey on IPMs). The key feature of a PMM is that, at each iteration, one seeks the minimum of the problem \eqref{LPQP} as stated, but one adds to the objective function a penalty term involving the norm of the difference between $x$ and the previously computed estimate. Then, an augmented Lagrangian method is applied to approximately solve each such sub-problem (see \cite{Proximal,SIAMCO-Roc} for a review of proximal point methods, and \cite{AS-Ber,JOTA-Hes,AP-Pow,MOR-Roc} for a review of augmented Lagrangian methods). In this paper we consider a blend of an infeasible IPM and a PMM, which can itself be thought of as a primal-dual regularized IPM. We refer to \cite{arxiv-PouGon} for a derivation of this approach as well as a proof of polynomial complexity. There are substantial advantages of applying regularization within IPMs, and the reliability and fast convergence of the hybrid IP--PMM make it an attractive approach for tackling linear and quadratic programming problems.

\par Upon applying such a technique, the vast majority of the computational effort arises from the solution of the resulting linear systems of equations at each IP--PMM iteration. These linear equations can be tackled in the form of an augmented system, or the reduced normal equations: we focus much of our attention on the augmented system, as unless $Q$ has some convenient structure it is highly undesirable to form the normal equations or apply the resulting matrix within a solver. Within the linear algebra community, direct methods are popular for solving such systems due to their generalizability, however if the matrix system becomes sufficiently large the storage and/or operation costs can rapidly become excessive, depending on the computer architecture used. The application of iterative methods, for instance those based around Krylov subspace methods such as the Conjugate Gradient method (CG) \cite{cg} or MINRES \cite{minres}, is an attractive alternative, but if one cannot construct suitable preconditioners which can be applied within such solvers then convergence can be prohibitively slow, and indeed it is possible that convergence is not achieved at all. The development of powerful preconditioners is therefore crucial.

\par A range of general preconditioners have been proposed for augmented systems arising from optimization problems, see \cite{iter:BGVZ,iter:BGZ,iter:Chai_Toh,iter:DR-pcgQP,iter:SWW-precond,iter:BCO-COAP,iter:OS-pcg1} for instance. However, as is the case within the field of preconditioning in general, these are typically sensitive to changes in structure of the matrices involved, and can have substantial memory requirements. Preconditioners have also been successfully devised for specific classes of programming problems solved using similar optimization methods: applications include those arising from multicommodity network flow problems \cite{iter:Castro}, stochastic programming problems \cite{CLZ16}, formulations within which the constraint matrix has primal block-angular structure \cite{iter:CC}, and PDE-constrained optimization problems \cite{PG17,PPS18}. However, such preconditioners exploit particular structures arising from specific applications; unless there exists such a structure which hints as to the appropriate way to develop a solver, the design of bespoke preconditioners remains a challenge.
%
%

\par It is therefore clear that a completely robust preconditioner for linear and quadratic programming does not currently exist, as available preconditioners are either problem-sensitive (with a possibility of failure when problem parameters or structures are modified), or are tailored towards specific classes of problems. This paper therefore aims to provide a first step towards the construction of generalizable preconditioners for linear and quadratic programming problems. A particular incentive for this work is so that, when new application areas arise that require the solution of large-scale matrix systems, the preconditioning strategy proposed here could form the basis of a fast and feasible solver.

\par This paper is structured as follows. In Section \ref{section Algorithmic Framework} we describe the IP--PMM approach used to tackle linear and quadratic programming problems, and outline our preconditioning approach. In Section \ref{Spectral Analysis} we carry out spectral analysis for the resulting preconditioned matrix systems. In Section \ref{section implementation} we describe the implementation details of the method, and in Section \ref{Section Numerical Results} we present numerical results obtained using the inexact IP--PMM approach. In particular we present the results of our preconditioned iterative methods, and demonstrate that our new solvers lead to rapid and robust convergence for a wide class of problems. Finally, in Section \ref{Conclusion} we give some concluding remarks.

\par \underline{\textbf{Notation:}}
For the rest of this manuscript, superscripts of a vector (or matrix, respectively) will denote the respective components of the vector, i.e. $x^j$ (or $M^{(i,j)}$, respectively). Given a set (or two sets) of indices $\mathcal{I}$ (or $\mathcal{I},\  \mathcal{J}$), the respective sub-vector (sub-matrix), will be denoted as $x^{\mathcal{I}}$ (or $M^{(\mathcal{I},\mathcal{J})}$). Furthermore, the $j$-th row (or column) of a matrix $M$ is denoted as $M^{(j,:)}$ ($M^{(:,j)}$, respectively). Given an arbitrary square (or rectangular) matrix $M$, then $\lambda_{\max}(M)$ and $\lambda_{\min}(M)$ (or $\sigma_{\max}(M)$ and $\sigma_{\min}(M)$) denote the largest and smallest eigenvalues (or singular values) of the matrix $M$, respectively. Given a {\red symmetric}  matrix $M$ we denote as $q(M)$ its {Rayleigh Quotient,
defined as \[ q(M) =  \left \{ z \in \mathbb{R} \ \text{such that} \ z = \frac{x^T M x}{x^T x}, \text{for some} \ x \in \mathbb{R}^n, x \ne 0 \right\}. \]
 Given a square matrix $M \in \mathbb{R}^{n\times n}$, $\textnormal{diag}(M)$ denotes the diagonal matrix satisfying $(\textnormal{diag}(M))^{(i,i)} = M^{(i,i)}$, for all $i \in \{1,\ldots,n\}$. Finally, given a vector $x \in \mathbb{R}^n$, we denote by $X$ the diagonal matrix satisfying $X^{(i,i)} = x^i$, for all $i \in \{1,\ldots,n\}$.}
\section{Algorithmic Framework} \label{section Algorithmic Framework}

\par In this section we derive a counterpart of the Interior Point--Proximal Method of Multipliers (IP--PMM) presented in \cite{arxiv-PouGon} for solving the pair \eqref{LPQP}--\eqref{Dual}, that employs a Krylov subspace method for solving the associated linear systems. For a polynomial convergence result of the method, in the case where the linear systems are solved exactly, the reader is referred to \cite{arxiv-PouGon}. Effectively, we merge the proximal method of multipliers with an infeasible interior point method, and present suitable general purpose preconditioners, using which we can solve the resulting Newton system, at every iteration, by employing an appropriate Krylov subspace method.

\par Assume that, at some iteration $k$ of the method, we have available an estimate $\eta_k$ for the optimal Lagrange multiplier vector $y^*$, corresponding to the equality constraints of \eqref{LPQP}.  Similarly, we denote by $\zeta_k$ the estimate of the primal solution $x^*$. Next, we define the proximal penalty function that has to be minimized at the $k$-th iteration of proximal method of multipliers, for solving (\ref{LPQP}), given the estimates $\eta_k,\ \zeta_k$:
\begin{equation*}
\mathcal{L}^{PMM}_{\delta_k, \rho_k} (x; \eta_k, \zeta_k) = c^Tx + \frac{1}{2}x^T Q x -\eta_k^T (Ax - b) + \frac{1}{2\delta_k}\|Ax-b\|^2 + \frac{\rho_k}{2}\|x-\zeta_k\|^2,
\end{equation*}
\noindent with $\delta_k > 0,\ \rho_k > 0$ some non-increasing penalty parameters. In order to solve the PMM sub-problem, we will apply one (or a few) iterations of an infeasible IPM. To do that, we alter the previous penalty function, by including logarithmic barriers, that is:
\begin{equation} \label{Proximal IPM Penalty}
\mathcal{L}^{IP-PMM}_{\delta_k, \rho_k} (x; \eta_k, \zeta_k) = \mathcal{L}^{PMM}_{\delta_k, \rho_k} (x; \eta_k, \zeta_k) - \mu_k \sum_{j=1}^n \ln x^j,
\end{equation}
\noindent where $\mu_k > 0$ is the barrier parameter. In order to form the optimality conditions of this sub-problem, we equate the gradient of $\mathcal{L}^{IP-PMM}_{\delta_k, \rho_k}(\ \cdot\ ; \eta_k, \zeta_k)$ to the zero vector, i.e.:
\begin{equation*}
c + Qx - A^T \eta_k + \frac{1}{\delta_k}A^T(Ax - b) + \rho_k (x - \zeta_k) - \mu_k X^{-1}\mathbf{1}_n = 0,
\end{equation*}
\noindent where $\mathbf{1}_n$ is a vector of ones of size $n$, and $X$ is a diagonal matrix containing the entries of $x$. We define the variables $y = \eta_k - \frac{1}{\delta_k}(Ax - b)$ and $z = \mu_k X^{-1}\mathbf{1}_n$, to obtain the following (equivalent) system of equations:
\begin{equation}\label{Proximal IPM FOC} \begin{bmatrix}
c + Qx - A^T y - z + \rho_k (x-\zeta_k)\\
Ax + \delta_k (y - \eta_k) - b\\
X z -  \mu_k \mathbf{1}_n
\end{bmatrix} = \begin{bmatrix}
0\\
0\\
0
\end{bmatrix}.
\end{equation}
\par To solve the previous mildly nonlinear system of equations, at every iteration $k$, we employ Newton's method and alter its right-hand side, using a centering parameter $\sigma_k \in  (0,1)$. {The centering parameter determines how fast $\mu_k$ is reduced. For $\sigma_k = 1$, we attempt to find a well-centered solution, while for $\sigma_k = 0$ we attempt to solve directly the original problem \eqref{LPQP}. As this is a crucial parameter in practice, it is often substituted by a predictor--corrector scheme which attempts to accelerate the convergence of the IPM (see \cite{SIAMOpt-Meh,COA-Gon}). For brevity, we present the former approach of heuristically choosing $\sigma_k$, but later on (in Section \ref{section implementation}) present the implemented predictor--corrector scheme.} In other words, at every iteration of IP--PMM we have available an iteration triple $(x_k,y_k,z_k)$ and we wish to solve the following system of equations:
\begin{equation}\label{Newton system}
\begin{bmatrix}
-(Q  + \rho_k I_n)  & A^T & I_n\\
A & \delta_k I_m  & 0\\
Z_k & 0 & X_k
\end{bmatrix} \begin{bmatrix}
\Delta x_k\\
\Delta y_k\\
 \Delta z_k
\end{bmatrix} = \begin{bmatrix}
c + Qx_k - A^T y_k + \sigma_k\rho_k (x_k - \zeta_k)-  z_k \\
b - Ax_k - \sigma_k \delta_k (y_k - \eta_k)\\
 \sigma_k \mu_k \mathbf{1}_n -X_k z_k 
\end{bmatrix} = \begin{bmatrix}
r_{d_k}\\
r_{p_k}\\
r_{\mu_k}
\end{bmatrix}.
\end{equation}
\noindent We proceed by eliminating variables $\Delta z_k$. In particular, we have that:
$$\Delta z_k = X_k^{-1}(r_{\mu_k} - Z_k \Delta x_k),$$
\noindent where $Z_k$ is a diagonal matrix containing the entries of $z_k$. Then, the augmented system that has to be solved at every iteration of IP--PMM reads as follows:
\begin{equation}\label{regularized augmented system}
\begin{bmatrix}
-(Q + \Theta_k^{-1} + \rho_k I_n)  & A^T\\
A & \delta_k I_m 
\end{bmatrix} \begin{bmatrix}
\Delta x_k\\
\Delta y_k
\end{bmatrix} = \begin{bmatrix}
r_{d_k} + z_k - \sigma_k \mu_k X_k^{-1} \mathbf{1}_n \\
r_{p_k}
\end{bmatrix},
\end{equation}
\noindent where $\Theta_k = X_k Z_k^{-1}$. An important feature of the matrix $\Theta_k$ is that, as the method approaches an optimal solution, the positive diagonal matrix has some entries that (numerically) approach infinity, while others approach zero. {By observing the matrix in \eqref{regularized augmented system}, we can immediately see the benefits of using regularization in IPMs. On one hand, the dual regularization parameter $\delta_k$ ensures that the system matrix in \eqref{regularized augmented system} is invertible, even if $A$ is rank-deficient. On the other hand, the primal regularization parameter $\rho_k$ controls the worst-case conditioning of the $(1,1)$ block of \eqref{regularized augmented system}, improving the numerical stability of the method (and hence its robustness). We refer the reader to \cite{arxiv-PouGon,JOTA-PouGon,OMS-AltGon} for a review of the benefits of regularization in the context of IPMs.}

\par As we argue in the spectral analysis, in the case where $Q = 0$, or $Q$ is diagonal, it is often beneficial to form the normal equations and approximately solve them using preconditioned CG. Otherwise, we solve system \eqref{regularized augmented system} using preconditioned MINRES. The normal equations read as follows:
\begin{equation} \label{regularized normal equations}
	M_{NE,k}  \Delta y_k = \xi_k, \qquad  M_{NE,k} = A (\Theta_k^{-1} + Q + \rho_k I_n)^{-1} A^T + \delta_k I_m ,
\end{equation}
\noindent where 
$$\xi_k = r_{p_k} + A(Q + \Theta_k^{-1} + \rho_k I_n)^{-1}(r_{d_k} + z_k - \sigma_k \mu_k X_k^{-1} \mathbf{1}_n). $$

\par In order to employ preconditioned MINRES or CG to solve \eqref{regularized augmented system} or \eqref{regularized normal equations} respectively, we must find an approximation for the coefficient matrix in \eqref{regularized normal equations}. To do so, we employ a symmetric and positive definite block-diagonal preconditioner for the saddle-point system \eqref{regularized augmented system}, involving approximations for the negative of the (1,1) block, as well as the Schur complement $M_{NE}$. See \cite{Kuzn95,MGW00,SIAMNA-SilWat} for motivation of such saddle-point preconditioners. 
In light of this, we approximate $Q$ in the (1,1) block by its diagonal, i.e. $\tilde{Q}  = \text{diag}(Q)$. 

	Then, we define the diagonal
matrix $E_k$ with entries
\begin{equation}
	\label{defE}
	E_k^{(i,i)} = \begin{cases} 0 & \ \text{if}  \ 
		\big((\Theta_k^{(i,i)})^{-1} + \tilde{Q}^{(i,i)} + \rho_k\big)^{-1} < C_{E,k} \min\{\mu_k, 1\}, \\
	\big((\Theta_k^{(i,i)})^{-1} + \tilde{Q}^{(i,i)} + \rho_k\big)^{-1} & \ \text{otherwise,} \end{cases} \end{equation}
where $i \in \{1,\ldots,n\}$, $C_{E,k}$ is a constant, and we construct the normal equations approximation $P_{NE,k} = L_M L_M^T$, by computing the (exact) Cholesky factorization of 
\begin{equation} \label{normal equations preconditioner}
P_{NE,k} = A E_k A^T + \delta_k I_m.
\end{equation}
	{\red 
	The dropping threshold in (\ref{defE}) guarantees that a coefficient in 
	the diagonal matrix 
		$\left(\Theta_k^{-1} + \tilde{Q}+ \rho_kI^{-1}\right)^{-1}$ is set to zero only if it is
		below a  constant times the barrier parameter $\mu_k$. }\textcolor{black}{As a consequence fewer outer products of columns of $A$ contribute to the normal equations, and the resulting preconditioner $P_{NE,k}$ is expected to be more sparse than $M_{NE,k}$.} {This choice is also crucial to guarantee
		that the eigenvalues of the preconditioned normal equations matrix  are independent
		of $\mu$. Before discussing the role of the \textit{constant} $C_{E,k}$, let us first address the preconditioning of the augmented system matrix in \eqref{regularized augmented system}. } The matrix $P_{NE,k}$ acts as a preconditioner for CG applied to the normal equations. In order to construct a preconditioner for the augmented system matrix in \eqref{regularized augmented system}, we employ a block-diagonal preconditioner of the form:

\begin{equation} \label{augmented system preconditioner}
P_{AS,k} = 
	\begin{bmatrix} 
\tilde{Q} + \Theta_k^{-1} + \rho_k I_n & 0 \\
0 & P_{NE,k}
\end{bmatrix},
\end{equation}
	\noindent with $P_{NE,k}$ defined in \eqref{normal equations preconditioner}. Note that MINRES requires a symmetric positive definite preconditioner and hence many other block preconditioners for \eqref{regularized augmented system} are not applicable. For example, block-triangular preconditioners, motivated by the work in \cite{Ipsen01,MGW00}, would generally require a non-symmetric solver such as GMRES \cite{SaSc86}. Nevertheless, block-diagonal preconditioners have been shown to be very effective in practice for problems with the block structure of \eqref{regularized augmented system} (see for example \cite{AN-BenGolLie,SIMAX-Not,SIAMNA-SilWat}). Furthermore, it can often be beneficial to employ CG with the preconditioner \eqref{normal equations preconditioner}, in the case where $Q=0$ or $Q$ is diagonal, since the former is expected to converge faster than MINRES with \eqref{augmented system preconditioner}. This will become clearer in the next section, where eigenvalue bounds for each of the preconditioned matrices are provided. 
	\par In view of the previous discussion, we observe that the quality of both preconditioners heavily depends on the choice of constant $C_{E,k}$, since this constant determines the quality of the approximation of the normal equations using \eqref{normal equations preconditioner}. In our implementation this constant is tuned dynamically, based on the quality of the preconditioner and its required memory (see Section \ref{section implementation}). {\red Moreover}, following the developments in \cite{arxiv-PouGon}, we tune the regularization variables $\delta_k,\ \rho_k$ based on the barrier parameter $\mu_k$. In particular, $\delta_k,\ \rho_k$ are forced to decrease at the same rate as $\mu_k$. The exact updates of these parameters are presented in Section \ref{section implementation}. As we will show in the next section, this tuning choice is numerically beneficial, since if $\delta_k,\ \rho_k$ are of the same order as $\mu_k$, then the spectrum of the preconditioned normal equations is independent of $\mu_k$; a very desirable property for preconditioned systems arising from IPMs.

\section{Spectral Analysis} \label{Spectral Analysis}
\subsection{Preconditioned normal equations}
	
In this section we provide a spectral analysis of the preconditioned normal equations in the LP or separable QP
	case, assuming that \eqref{normal equations preconditioner} is used as the preconditioner. Although this is a specialized setting, we may make use of the following result in our analysis of the augmented system arising from the general QP case. 
	\par Let us define this normal equations matrix $\tilde{M}_{NE,k}$, as 
\begin{equation} \label{regularized NE, Compact form}
	\tilde{M}_{NE,k} = A \tilde{G}_k  A^T + \delta_k I_m, \quad \text{with} \ 
	\tilde{G}_k =  \left(\tilde Q + \Theta^{-1} + \rho_k I_n\right)^{-1}. 
\end{equation}


	\par The following Theorem provides lower and upper bounds on the eigenvalues of $P_{NE,k}^{-1} \tilde{M}_{NE,k}$, at an arbitrary iteration $k$ of Algorithm IP--PMM.
 \begin{theorem}
	 \label{theorem_LP}
	 There are $m-r$ eigenvalues of $P_{NE,k}^{-1} \tilde{M}_{NE,k}$ at one, where $r$ is the column rank of $A^T$,
	 corresponding to  linearly
	 independent vectors belonging to the nullspace of $A^T$.
	 The remaining eigenvalues are bounded as 
	 \[1 \le \lambda \le 1 + \frac{C_{E,k} \mu_k}{\delta_k} \sigma^2_{\max} (A). \]
 \end{theorem}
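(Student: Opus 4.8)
The plan is to exploit the fact that $P_{NE,k}$ and $\tilde{M}_{NE,k}$ differ only through the diagonal matrix that is sparsified in \eqref{defE}. Writing $D_k = \tilde{G}_k - E_k$, I would first observe that $D_k$ is diagonal and positive semidefinite: by construction $E_k^{(i,i)}$ equals the corresponding entry $\tilde{G}_k^{(i,i)} = \big((\Theta_k^{(i,i)})^{-1} + \tilde{Q}^{(i,i)} + \rho_k\big)^{-1}$ whenever that entry is at least $C_{E,k}\min\{\mu_k,1\}$, and is zero otherwise, so each diagonal entry of $D_k$ is either $0$ or a dropped value strictly below $C_{E,k}\min\{\mu_k,1\} \le C_{E,k}\mu_k$. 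Consequently
\[
\tilde{M}_{NE,k} = A E_k A^T + A D_k A^T + \delta_k I_m = P_{NE,k} + A D_k A^T,
\]
and therefore $P_{NE,k}^{-1}\tilde{M}_{NE,k} = I_m + P_{NE,k}^{-1} A D_k A^T$. The whole analysis then reduces to locating the spectrum of $P_{NE,k}^{-1} A D_k A^T$.

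For the invariant subspace and the eigenvalues at one, I would take any nonzero $v$ in the nullspace of $A^T$. Then $A D_k A^T v = 0$ while $P_{NE,k} v = (A E_k A^T + \delta_k I_m)v = \delta_k v$, so $\tilde{M}_{NE,k}v = \delta_k v = P_{NE,k}v$ and $v$ is a generalized eigenvector with eigenvalue $\lambda = 1$. Since the nullspace of $A^T$ has dimension $m - r$, this accounts for $m-r$ linearly independent eigenvectors at one. For the lower bound on the remaining eigenvalues, I would note that $P_{NE,k}\succ 0$ (because $\delta_k>0$), so $P_{NE,k}^{-1}\tilde{M}_{NE,k}$ is similar to the symmetric matrix $P_{NE,k}^{-1/2}\tilde{M}_{NE,k}P_{NE,k}^{-1/2} = I_m + P_{NE,k}^{-1/2}A D_k A^T P_{NE,k}^{-1/2}$; since $D_k\succeq 0$ the second summand is positive semidefinite, hence all eigenvalues are real and satisfy $\lambda \ge 1$.

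For the upper bound I would work with the generalized Rayleigh quotient associated with the symmetric-definite pencil $(A D_k A^T,\,P_{NE,k})$, so that $\lambda_{\max}(P_{NE,k}^{-1}AD_kA^T) = \max_{v\ne 0} \frac{v^T A D_k A^T v}{v^T P_{NE,k} v}$. In the denominator I bound $v^T P_{NE,k} v = v^T A E_k A^T v + \delta_k\|v\|^2 \ge \delta_k\|v\|^2$, using $A E_k A^T\succeq 0$. In the numerator I set $w = A^T v$ and use $w^T D_k w \le \lambda_{\max}(D_k)\|w\|^2 < C_{E,k}\mu_k\,\|A^T v\|^2 \le C_{E,k}\mu_k\,\sigma_{\max}^2(A)\|v\|^2$, the last step following from $\|A^T v\|\le \sigma_{\max}(A)\|v\|$. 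Combining the two estimates yields $\lambda - 1 \le \frac{C_{E,k}\mu_k}{\delta_k}\sigma_{\max}^2(A)$, which is exactly the claimed upper bound.

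The computation is essentially routine once the additive splitting $\tilde{M}_{NE,k}=P_{NE,k}+AD_kA^T$ is in place; I expect the only points requiring care to be (i) justifying that the generalized eigenvalues of the non-symmetric product $P_{NE,k}^{-1}\tilde{M}_{NE,k}$ are real and controlled by the corresponding Rayleigh quotient, which is handled by the symmetric similarity transform above, and (ii) checking that the sparsification threshold indeed bounds $\lambda_{\max}(D_k)$ by $C_{E,k}\mu_k$ rather than by $C_{E,k}\min\{\mu_k,1\}$, which holds since $\min\{\mu_k,1\}\le\mu_k$. The eigenvalue count at one also deserves a sentence confirming that the $m-r$ nullspace vectors of $A^T$ are genuine, linearly independent generalized eigenvectors, which is immediate because $P_{NE,k}$ acts as $\delta_k I_m$ on that subspace.
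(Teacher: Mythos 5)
Your proof is correct and follows essentially the same route as the paper: the splitting $\tilde{M}_{NE,k}=P_{NE,k}+A(\tilde{G}_k-E_k)A^T$, the eigenvalue $1$ on the nullspace of $A^T$, positive semidefiniteness of $\tilde{G}_k-E_k$ for the lower bound, and the chain of Rayleigh-quotient estimates (dropping $A E_k A^T$ in the denominator, bounding $\lambda_{\max}(\tilde{G}_k-E_k)$ by $C_{E,k}\mu_k$ and $\|A^Tv\|^2/\|v\|^2$ by $\sigma_{\max}^2(A)$) for the upper bound are exactly the paper's steps. The only difference is presentational: you justify the real spectrum via the explicit similarity transform $P_{NE,k}^{-1/2}\tilde{M}_{NE,k}P_{NE,k}^{-1/2}$, whereas the paper works directly with the symmetric-definite generalized eigenproblem.
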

 \begin{proof}
	 The eigenvalues of $P_{NE,k}^{-1} \tilde{M}_{NE,k}$ must satisfy
	 \begin{equation}
		 A \tilde{G}_k A^T u + \delta_k u =  
	  \lambda A E_k A^T u + \lambda \delta_k u . 
		 \label{genprob}
	 \end{equation}
	 Multiplying (\ref{genprob}) on the left by $u^T$ and setting $z = A^T u$ yields
	 \[ \lambda = \frac{z^T \tilde{G}_k z + \delta_k \|u\|^2}
{z^T E_k z + \delta_k \|u\|^2} = 1 + 
	 \frac{z^T \left(\tilde{G}_k-E_k\right) z}
	 {z^T E_k z + \delta_k \|u\|^2} =  1 + \alpha. \]
	 For every vector $u$ in the nullspace of $A^T$ we have $z = 0$ and $\lambda = 1$.
	 The fact that both $E_k$ and $\tilde{G}_k - E_k \succeq 0$ (from the definition of $E_k$) implies the lower bound.
	 To prove the upper bound we first observe that  $\lambda_{\max} (\tilde{G}_k-E_k) \le C_{E,k} \mu_k$;
	 then
	 \[ \alpha = 
	 \frac{z^T \left(\tilde{G}_k-E_k\right) z}
	 {z^T E_k z + \delta_k \|u\|^2} \le
	 \frac{z^T \left(\tilde{G}_k-E_k\right) z}
	 {\delta_k \|u\|^2}  =
	 \frac{z^T \left(\tilde{G}_k-E_k\right) z}{\|z\|^2} \frac{1}{\delta_k} \frac{\|z\|^2}{\|u\|^2}
	 = \frac{z^T \left(\tilde{G}_k-E_k\right) z}{\|z\|^2} \frac{1}{\delta_k} \frac{u^T A A^T u}{\|u\|^2},
	 \]
	 and the thesis follows by inspecting the Rayleigh Quotients of $\tilde{G}_k - E_k$ and
	 $A A^T$.
 \end{proof}

\begin{Remark}
Following the discussion in the end of the previous section, we know that $\dfrac{\mu_k}{\delta_k} = O(1)$, since IP--PMM forces $\delta_k$ to decrease at the same rate as $\mu_k$. Combining this with the result of Theorem \ref{theorem_LP} implies that the condition number of the preconditioned normal equations is asymptotically independent of $\mu_k$.
\end{Remark}
\begin{Remark}
	 In the LP case ($ Q = 0$), or the separable QP case ($Q$ diagonal),
	 Theorem \ref{theorem_LP} characterizes the eigenvalues of the preconditioned matrix within the
	 CG method.

\end{Remark}

\subsection{BFGS-like low-rank update of the \texorpdfstring{$\bm{P_{NE,k}}$}{p} preconditioner}
Given a rectangular (tall) matrix $V \in \mathbb{R}^{m \times p} $  with maximum column rank,
it is possible to define a generalized block-tuned preconditioner $P$ satisfying the property
\[ P^{-1} \tilde{M}_{NE,k} V = \nu V, \]
so that the columns of $V$ become eigenvectors of the preconditioned matrix corresponding to the 
eigenvalue $\nu$.
A way to construct $P$ (or its explicit inverse)
is suggested by the BFGS-based preconditioners used e.g. in \cite{bbm08sisc} for accelerating
Newton linear systems or analyzed in \cite{BMnlaa14} for general sequences of linear systems, that is
\[ P^{-1} = \nu V \Pi V^T + (I_m  - V \Pi V^T \tilde{M}_{NE,k}) P_{NE,k}^{-1} (I_m  - \tilde{M}_{NE,k} V \Pi V^T), \quad \text{with} \quad \Pi = (V^T \tilde{M}_{NE,k} V)^{-1}.\]

Note also that if the columns of $V$ would be chosen as e.g. the $p$ 
exact rightmost eigenvectors of $P_{NE,k}^{-1} \tilde{M}_{NE,k}$ (corresponding to the $p$ largest eigenvalues) then all the other eigenpairs, 
\[(\lambda_1, z_1), \ldots, (\lambda_{m-p}, z_{m-p}),\]
of the new preconditioned matrix $P^{-1} \tilde{M}_{NE,k}$ would remain unchanged {(nonexpansion
of the spectrum of $P^{-1} \tilde{M}_{NE,k}$, see \cite{GraSarTshi})}, 
as stated in the following:
\begin{theorem} \label{thm: low-rank updates}
	If the columns of $V$ are the exact rightmost eigenvectors of $P_{NE,k}^{-1} \tilde{M}_{NE,k}$ then,
for $j = 1, \ldots, m-p$, it holds that
	\[ P^{-1} \tilde{M}_{NE,k} z_j = P_{NE,k}^{-1} \tilde{M}_{NE,k} z_j = \lambda_j z_j.\] 
\end{theorem}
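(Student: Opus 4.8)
The plan is to reduce the statement to a single orthogonality relation that follows from the symmetry of the underlying generalized eigenproblem, after which the claim collapses to a direct substitution. Throughout, I would write $M := \tilde{M}_{NE,k}$ and $P_0 := P_{NE,k}$ for brevity, and note that both are symmetric positive definite, so that $\Pi = (V^T M V)^{-1}$ is well defined whenever $V$ has full column rank (indeed $V^T M V$ is then itself symmetric positive definite).

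First I would record that the eigenpairs $(\lambda_j, z_j)$ of $P_0^{-1} M$ solve the generalized symmetric eigenproblem $M z_j = \lambda_j P_0 z_j$ with $M \succ 0$ and $P_0 \succ 0$. Because both matrices are symmetric positive definite, the eigenvectors may be taken to form an $M$-orthogonal (equivalently, $P_0$-orthogonal) basis; in particular the rightmost eigenvectors collected in the columns of $V$ are $M$-orthogonal to each of the remaining eigenvectors, i.e. $V^T M z_j = 0$ for $j = 1, \ldots, m-p$. For repeated eigenvalues this orthogonality can always be arranged within each eigenspace, so it may be assumed without loss of generality. This relation, together with $P_0^{-1} M z_j = \lambda_j z_j$, is all that the argument requires.

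The second step is the substitution. Applying $P^{-1}$ to $M z_j$, the leading term $\nu V \Pi V^T M z_j$ vanishes immediately because $V^T M z_j = 0$. In the remaining term, the right-hand projector acts as the identity, $(I_m - M V \Pi V^T) M z_j = M z_j$, again by $V^T M z_j = 0$; applying $P_0^{-1}$ then gives $\lambda_j z_j$; and the left-hand projector $(I_m - V \Pi V^T M)$ leaves $\lambda_j z_j$ unchanged since $V^T M z_j = 0$ once more. Collecting the pieces yields $P^{-1} M z_j = \lambda_j z_j = P_0^{-1} M z_j$, which is precisely the assertion.

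I expect the only genuine content to be the $M$-orthogonality $V^T M z_j = 0$; once this is in hand the computation is purely mechanical, as each of the three occurrences of $V^T M z_j$ is annihilated in turn. The point to handle with care is therefore the justification of that orthogonality — making the dependence on the symmetry of \emph{both} $M$ and $P_0$ explicit, and dealing cleanly with possible eigenvalue multiplicities — rather than the algebra of the BFGS-like update itself.
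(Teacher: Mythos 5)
Your proposal is correct and follows essentially the same route as the paper: establish the orthogonality $V^T \tilde{M}_{NE,k} z_j = 0$ from the symmetry of the generalized eigenproblem $\tilde{M}_{NE,k} z = \lambda P_{NE,k} z$ (the paper phrases this via the $P_{NE,k}$-orthonormality of the eigenvector basis), then substitute into the BFGS-like formula for $P^{-1}$ so that each occurrence of $V^T \tilde{M}_{NE,k} z_j$ is annihilated. Your explicit remarks on positive definiteness and on handling repeated eigenvalues are minor refinements of the same argument, not a different approach.
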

\begin{proof}
	The eigenvectors of the symmetric generalized eigenproblem  $\tilde{M}_{NE,k} x = \lambda P_{NE,k} x$ form
	a $P_{NE,k}$-orthonormal basis, and therefore
	$V^T P_{NE,k} z_j = V^T \tilde{M}_{NE,k} z_j = 0, \ j = 1, \ldots, m-p$.  
	Then
	\begin{eqnarray*} P^{-1} \tilde{M}_{NE,k} z_j &= &
		\nu V {\blue \Pi} V^T \tilde{M}_{NE,k} z_j \\ && +\ (I_m - V{\blue \Pi} V^T \tilde{M}_{NE,k}) P_{NE,k}^{-1} (\tilde{M}_{NE,k} z_j  - \tilde{M}_{NE,k} V{\blue \Pi}  V^T \tilde{M}_{NE,k} z_j)   \\
		& = & (I_m - V{\blue \Pi}  V^T \tilde{M}_{NE,k}) P_{NE,k}^{-1} \tilde{M}_{NE,k} z_j=  (I_m - V {\blue \Pi} V^T \tilde{M}_{NE,k}) \lambda_j z_j = \lambda_j z_j. 
\end{eqnarray*}
\end{proof}
\noindent
{\red 
A similar result to Theorem \ref{thm: low-rank updates}  can be found in~\cite{GraSarTshi}, where the low-rank
correction produces what the authors call a \textit{second-level preconditioner}.
}

Usually columns of $V$ are chosen as the (approximate) eigenvectors of $P_{NE,k}^{-1} \tilde{M}_{NE,k}$ corresponding to the smallest
eigenvalues of this matrix~\cite{Saad-et-al2000,LB_Algorithms_2020}. 
However, this choice would not produce a significant reduction 
in the condition number of the preconditioned matrix as
the spectral analysis of Theorem \ref{theorem_LP} suggests a possible clustering of smallest eigenvalues
around 1. We choose instead, as the columns of $V$, the rightmost eigenvectors of $P_{NE,k}^{-1}  \tilde{M}_{NE,k}$, approximated
with low accuracy by the function {\texttt eigs} of MATLAB.
The $\nu$ value  must be selected to satisfy $\lambda_{\min}(P_{NE,k}^{-1}\tilde{M}_{NE,k}) < \nu \ll 
\lambda_{\max}(P_{NE,k}^{-1}\tilde{M}_{NE,k})$. We choose  $\nu = 10$, {\red to ensure that
this new eigenvalue lies in the interior of the spectral interval},
and the column size of $V$ as $p = 10$. {\red This last choice is driven by experimental evidence
that in most cases there are a small number of large outliers in $P_{NE,k}^{-1}\tilde{M}_{NE,k}$. 
A larger value of $p$ would (unnecessarily) increase the cost of applying the preconditioner.} 

Finally, by computing approximately the rightmost eigenvectors,
we would expect a slight perturbation of $\lambda_1, \ldots, \lambda_{m-p}$, depending on the accuracy of this approximation. For a detailed perturbation analysis see e.g. \cite{Tshi}.
\subsection{Preconditioned augmented system}

In the MINRES solution of QP instances the system matrix is
\[ M_{AS,k} = \begin{bmatrix} -F_k & A^T \\ A & \delta_k I_m \end{bmatrix}, \qquad F_k = Q + \Theta_k^{-1} + \rho_k I_n, \]
	while the preconditioner is
\[ P_{AS,k} = \begin{bmatrix} \tilde F_k & 0 \\ 0 & P_{NE,k}  \end{bmatrix}, \qquad \tilde F_k = 
	\tilde Q + \Theta_k^{-1} + \rho_k I_n \equiv \tilde{G}_k^{-1}. \]
	The following Theorem will characterize the eigenvalues of $P_{AS,k}^{-1} M_{AS,k}$
	in terms of the extremal eigenvalues of the preconditioned
	(1,1) block of \eqref{regularized augmented system}, $\tilde F_k^{-1} F_k$, and of
	$P_{NE,k}^{-1} \tilde{M}_{NE,k}$ as described by Theorem \ref{theorem_LP}. We will 
	work with (symmetric positive definite) similarity transformations of these matrices defined as
			\begin{equation}
				\label{simsym}
				\hat F_k =  \tilde F_k^{-1/2} F_k  \tilde F_k^{-1/2}, \quad 
				\hat{M}_{NE,k} = P_{NE,k}^{-1/2} \tilde{M}_{NE,k} P_{NE,k}^{-1/2},
\end{equation}
and set

		\[
		\begin{array}{lcllcllcl}
			\alpha_{NE} &=& \lambda_{\min} ( \hat{M}_{NE,k}),   & \beta_{NE} &=& \lambda_{\max} \left( \hat{M}_{NE,k}\right),
& \kappa_{NE} &=& \dfrac{\beta_{NE}}{\alpha_{NE}}, 
			\\[.6em]
\alpha_{F} &=& \lambda_{\min} \left(\hat F_k\right), &   \beta_{F} &=& \lambda_{\max} \left(\hat F_k\right),
& \kappa_{F} &=& \dfrac{\beta_{F}}{\alpha_{F}}. 
\end{array} \]
\noindent Hence, an arbitrary  element of the numerical range of these matrices is represented as:
\[\begin{array}{lcllcl}
	\gamma_{NE} & \in & q(\hat {M}_{NE,k}) = [\alpha_{NE}, \beta_{NE}],  ~~
	& ~~\gamma_{F}  &\in &q(\hat F_k) =  [\alpha_{F}, \beta_{F}]. 
\end{array}\] 
\noindent Similarly, an arbitrary element of $q(P_{NE,k})$ is denoted by
\[\begin{array}{lcllclcl}
\gamma_{p} & \in & [\lambda_{\min}(P_{NE,k}),\lambda_{\max} (P_{NE,k})] &\subseteq & \left[\delta_k,\dfrac{\sigma_{\max}^2(A)}{\rho_k} + \delta_k\right).
	\end{array}\]
\noindent  Observe that $\alpha_{F} \le 1 \le \beta_{F}$ as 
		\[ \frac 1n \sum_{i=1}^n \lambda_i \left(\tilde F_k^{-1} F_k\right) = \frac 1n \Tr\left(\tilde F_k^{-1} F_k\right)
 = 1.\]
	
		\begin{theorem}
		Let $k$ be an arbitrary iteration of IP--PMM. Then, the eigenvalues of $P_{AS,k}^{-1} M_{AS,k}$ lie in the union of the following intervals:
			\[ I_- = \left[- \beta_{F} -\sqrt{\beta_{NE}}, -\alpha_{F}\right]; \quad I_+ = \left[\frac{1}{1+\beta_{F}} , 1 + \sqrt{\beta_{NE}-1}\right].
			\]
	\end{theorem}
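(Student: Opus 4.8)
The plan is to pass to the symmetrically preconditioned matrix and analyze it through the two scalar Rayleigh-quotient relations coming from its block rows. Since $P_{AS,k} \succ 0$, the eigenvalues of $P_{AS,k}^{-1} M_{AS,k}$ coincide with those of the symmetric matrix
\[ \hat M = P_{AS,k}^{-1/2} M_{AS,k} P_{AS,k}^{-1/2} = \begin{bmatrix} -\hat F_k & \hat A^T \\ \hat A & \delta_k P_{NE,k}^{-1} \end{bmatrix}, \qquad \hat A = P_{NE,k}^{-1/2} A \tilde F_k^{-1/2}, \]
so in particular they are real. First I would record the three identities that drive everything: (i) $\hat A \hat A^T = \hat M_{NE,k} - \delta_k P_{NE,k}^{-1}$, using $\tilde F_k^{-1}=\tilde G_k$ and $A\tilde G_k A^T = \tilde M_{NE,k}-\delta_k I_m$, whence $\hat A \hat A^T + \delta_k P_{NE,k}^{-1} = \hat M_{NE,k}$ and $\lambda_{\max}(\hat A \hat A^T) \le \beta_{NE}$; (ii) $\delta_k P_{NE,k}^{-1} \preceq I_m$, which follows from $P_{NE,k} = A E_k A^T + \delta_k I_m \succeq \delta_k I_m$; and (iii) $\alpha_{NE} = 1$, i.e.\ $\hat M_{NE,k} \succeq I_m$, which is exactly the content of Theorem \ref{theorem_LP}. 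Writing an eigenpair as $(\lambda,(\hat u,\hat v))$, the two block rows read $-\hat F_k \hat u + \hat A^T \hat v = \lambda \hat u$ and $\hat A \hat u + \delta_k P_{NE,k}^{-1}\hat v = \lambda \hat v$; note also that $M_{AS,k}$ is nonsingular (its Schur complement $\delta_k I_m + A F_k^{-1} A^T$ is positive definite), so $\lambda \neq 0$.

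For the negative eigenvalues I would first check that $\hat u \neq 0$ (if $\hat u = 0$ the second row forces $\lambda$ to be an eigenvalue of $\delta_k P_{NE,k}^{-1}\succ 0$, contradicting $\lambda<0$). Since $\delta_k P_{NE,k}^{-1} - \lambda I_m \succ 0$ for $\lambda<0$, I would eliminate $\hat v = -(\delta_k P_{NE,k}^{-1} - \lambda I_m)^{-1}\hat A\hat u$ and take the Rayleigh quotient of the resulting equation against $\hat u$, obtaining
\[ \lambda \|\hat u\|^2 = -\hat u^T \hat F_k \hat u - \hat u^T \hat A^T (\delta_k P_{NE,k}^{-1} - \lambda I_m)^{-1} \hat A \hat u. \]
Both terms on the right are nonnegative, so $\lambda \le -\hat u^T \hat F_k \hat u/\|\hat u\|^2 \le -\alpha_F$, giving the right endpoint of $I_-$. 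For the left endpoint I would bound $(\delta_k P_{NE,k}^{-1}-\lambda I_m)^{-1} \preceq (-\lambda)^{-1} I_m$ and use $\hat A\hat A^T \preceq \beta_{NE} I_m$ together with $\hat F_k \preceq \beta_F I_m$ to get $\lambda \ge -\beta_F - \beta_{NE}/(-\lambda)$; setting $t=-\lambda$ this is the quadratic inequality $t^2 - \beta_F t - \beta_{NE} \le 0$, whose positive root is at most $\beta_F+\sqrt{\beta_{NE}}$, yielding $\lambda \ge -\beta_F-\sqrt{\beta_{NE}}$.

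For the positive eigenvalues I would proceed dually: here $\hat v \neq 0$, and since $\hat F_k + \lambda I_m \succ 0$ I would eliminate $\hat u = (\hat F_k + \lambda I_m)^{-1}\hat A^T\hat v$ and test against $\hat v$ (normalized to $\|\hat v\|=1$), which gives $\lambda = \hat v^T\hat A(\hat F_k + \lambda I_m)^{-1}\hat A^T\hat v + d$ with $d = \delta_k \hat v^T P_{NE,k}^{-1}\hat v$ and $s := \|\hat A^T\hat v\|^2$. The identities above constrain $(s,d)$ by $s+d = \hat v^T\hat M_{NE,k}\hat v \in [1,\beta_{NE}]$ and $0 \le d \le 1$. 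For the upper endpoint of $I_+$, the bound $(\hat F_k + \lambda I_m)^{-1}\preceq \lambda^{-1}I_m$ yields $\lambda^2 - d\lambda - s \le 0$; since the positive root $\tfrac12(d+\sqrt{d^2+4s})$ is nondecreasing in both $s$ and $d$, maximizing it over this feasible region (attained at $(s,d)=(\beta_{NE}-1,1)$) gives $\lambda \le \tfrac12(1+\sqrt{4\beta_{NE}-3}) \le 1+\sqrt{\beta_{NE}-1}$. For the lower endpoint, the bound $(\hat F_k + \lambda I_m)^{-1}\succeq (\beta_F+\lambda)^{-1}I_m$ together with $s \ge 1-d$ gives $\lambda \ge (1-d)/(\beta_F+\lambda) + d$, equivalently $g(\lambda) := \lambda^2 + (\beta_F-d)\lambda - (d\beta_F - d + 1) \ge 0$; since $g(0) < 0$ and a short computation shows $g(1/(1+\beta_F)) \le 0$, the controlling (larger) root, and hence $\lambda$, is at least $1/(1+\beta_F)$.

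The main obstacle is the positive branch, and specifically getting the upper endpoint sharp. Relaxing only to $s+d \le \beta_{NE}$ (ignoring $d \le 1$) would let $(s,d)\to(0,\beta_{NE})$ and yield the far weaker bound $\lambda \le \beta_{NE}$; the clean estimate $1+\sqrt{\beta_{NE}-1}$ hinges on also using $\delta_k P_{NE,k}^{-1}\preceq I_m$ to cap $d$ at $1$. The lower positive endpoint is delicate for a different reason: the inequality one derives is quadratic in $\lambda$, and it pins $\lambda$ down only after one checks the sign of $g$ at $0$ and at $1/(1+\beta_F)$ to identify which root governs. Combining the two branches with $\lambda \neq 0$ then places every eigenvalue in $I_- \cup I_+$.
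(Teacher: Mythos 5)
Your proof is correct, and although it shares the paper's skeleton---passing to the symmetric matrix $P_{AS,k}^{-1/2}M_{AS,k}P_{AS,k}^{-1/2}$, the identity $\hat A\hat A^T=\hat M_{NE,k}-\delta_k P_{NE,k}^{-1}$ (your $\hat A$ is the paper's $R_k$), block elimination, and the same three ingredients, namely $\hat M_{NE,k}\succeq I_m$ from Theorem \ref{theorem_LP}, $\delta_k P_{NE,k}^{-1}\preceq I_m$, and $\alpha_F\le 1\le\beta_F$---the execution is genuinely different. For the negative branch the paper eliminates the first block, which forces it to exclude $[-\beta_F,-\alpha_F]$ a priori and to patch this up afterwards by remarking that the derived interval contains the excluded one; you instead eliminate $\hat v$ through $(\delta_k P_{NE,k}^{-1}-\lambda I_m)^{-1}$, which exists for every $\lambda<0$, so no exclusion or patching is needed. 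For the positive branch the paper converts the Rayleigh-quotient relation into an \emph{exact} quadratic equation in $\lambda$ with parameters $\gamma_F,\gamma_{NE},\omega$ ranging over numerical ranges and then argues monotonicity of its roots in these three parameters; you work with quadratic \emph{inequalities} obtained from the operator bounds $\lambda^{-1}I_n\succeq(\hat F_k+\lambda I_n)^{-1}\succeq(\beta_F+\lambda)^{-1}I_n$, followed by an optimization over the feasible set $\{s+d\in[1,\beta_{NE}],\ 0\le d\le1\}$ and a sign check of $g$ at $0$ and $1/(1+\beta_F)$. The paper's route displays the eigenvalues as explicit functions of the spectral parameters; yours buys robustness, since you explicitly dispose of the degenerate cases $\hat u=0$, $\hat v=0$, $\lambda=0$, and in particular of eigenvectors with $\hat A^T\hat v=0$: the paper simply asserts $R_k^Tw_2\ne0$, a claim that is in fact false whenever $A^T$ has a nontrivial nullspace (such eigenvectors exist and carry eigenvalue $1$---harmless for the theorem, but unaddressed there). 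Two small repairs to your write-up: (i) Theorem \ref{theorem_LP} gives $\alpha_{NE}\ge1$, not $\alpha_{NE}=1$; the inequality is all you use. (ii) In the positive upper bound, coordinate-wise monotonicity of $\tfrac12\bigl(d+\sqrt{d^2+4s}\bigr)$ only reduces the maximization to the Pareto segment $s+d=\beta_{NE}$, $0\le d\le1$; you need the extra observation that along this segment the root is nondecreasing in $d$ (true precisely because $\beta_{NE}\ge1$), which then yields the corner $(\beta_{NE}-1,1)$ and the bound $\tfrac12\bigl(1+\sqrt{4\beta_{NE}-3}\bigr)\le 1+\sqrt{\beta_{NE}-1}$.
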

	\begin{proof}
		  The eigenvalues of $P_{AS,k}^{-1} M_{AS,k}$ are the same as those of  
		\[P_{AS,k}^{-1/2} M_{AS,k}P_{AS,k}^{-1/2}  = 
		\begin{bmatrix} \tilde F_k^{-1/2} & 0 \\ 0 & P_{NE,k}^{-1/2}  \end{bmatrix}
\begin{bmatrix} -F_k & A^T \\ A & \delta_k I_m \end{bmatrix}
			\begin{bmatrix} \tilde F_k^{-1/2} & 0 \\ 0 & P_{NE,k}^{-1/2}  \end{bmatrix}
				=
	\begin{bmatrix} -\hat F_k & R_k^T \\ R_k & 
	\delta_k P_{NE,k}^{-1} 	\end{bmatrix},  \]
			where  $\hat F_k$ is defined in (\ref{simsym}) and
			$R_k = P_{NE,k}^{-1/2} A \tilde F_k^{-1/2} .$

Any eigenvalue $\lambda$ of $P_{AS,k}^{-1/2} M_{AS,k}P_{AS,k}^{-1/2} $
		must therefore satisfy
		\begin{eqnarray}
			\label{first}
		-\hat F_k w_1\ +&  R_k^T w_2 &= \lambda w_1, \\
			\label{second}
		R_k w_1\ +&   \delta_k P_{NE,k}^{-1} w_2  &= \lambda w_2. \end{eqnarray}
		First note that
		\begin{equation}
			\label{Definition_of_R_k R_k^T}
			  R_k R_k^T = P_{NE,k}^{-1/2} A \tilde F_k^{-1} A^T P_{NE,k}^{-1/2} = P_{NE,k}^{-1/2} \left(\tilde M_{NE,k} - \delta_k I_m\right) P_{NE,k}^{-1/2} = \hat{M}_{NE,k} -
		\delta_k P_{NE,k}^{-1}. 
		\end{equation}
		The eigenvalues of $R_k R_k^T$ are therefore characterized by Theorem \ref{theorem_LP}.
		If $\lambda  \not \in [-\beta_{F}, -\alpha_{F}]$ then $\hat F_k + \lambda I_n$ is symmetric positive (or negative) definite;
		moreover $R_k^T w_2 \ne 0$.
		Then from (\ref{first}) we obtain an expression for $w_1$: 
		\[ w_1 = (\hat F_k + \lambda I_n)^{-1} R_k^T w_2, \]
		which, after substituting in (\ref{second}) yields
		\[ R_k  (\hat F_k + \lambda I_n)^{-1} R_k^T  w_2  + \delta_k P_{NE,k}^{-1} w_2= \lambda w_2. \]
		Premultiplying by $w_2^T$ and dividing by $\|w_2\|^2$, we obtain the following equation where we set $z = R_k^T w_2$:
		\[ \lambda  = \frac{z^T (\hat F_k + \lambda I_n)^{-1} z}{z^T z}
		\frac {w_2^T R_k R_k^T w_2}{w_2^T w_2}  +\delta_k \frac{w_2^T P_{NE,k}^{-1} w_2}{w_2^Tw_2} = \frac{1}{\gamma_{F} + \lambda} \left(\gamma_{NE} - \frac{\delta_k}{\gamma_p}\right)
		+  \frac{\delta_k}{\gamma_{p}}. \]
		So $\lambda$ must satisfy the following second-order algebraic equation
		\[ \lambda^2 +  \left(\gamma_{F} -\omega \right)\lambda-\left(\omega (\gamma_F - 1)+\gamma_{NE}\right) = 0.\]
		where we have set $\omega = \dfrac{\delta_k}{\gamma_{p}}$ satisfying $\omega \leq 1$ for all $k \geq 0$. 

		We first consider the negative eigenvalue solution of the previous algebraic equation, that is:
		\begin{equation*}
		\begin{split}
		 \lambda_{-} =\ & \frac{1}{2} \bigg[\omega - \gamma_{F} -\sqrt{(\gamma_{F} - \omega)^2 + 4(\omega \gamma_{F} -\omega + \gamma_{NE})} \bigg]\\
			=\ & \frac{1}{2} \bigg[ \omega - \gamma_{F} -\sqrt{(\gamma_{F} + \omega)^2 + 4(\gamma_{NE}-\omega)}\bigg]\\
		\leq\ & \frac{1}{2} \bigg[ \omega - \gamma_{F} - \sqrt{(\gamma_{F} + \omega)^2} \bigg] = -\gamma_{F} \leq - \alpha_{F},
		\end{split} 
		 \end{equation*}
		 {\red where the last line is obtained by noting that $\gamma_{NE}\geq1$ from Theorem \ref{theorem_LP}, and $\omega\leq1$. In order to derive a lower bound on $\lambda_{-}$ we 
		use the fact that $\lambda_{-}$ is an increasing function with respect to $\omega$, and decreasing with respect to $\gamma_{NE}$ and $\gamma_F$.  Hence,}
		 \begin{equation*}
		\begin{split}
			\lambda_{-} =\ & \frac{1}{2} \bigg[ \omega - \gamma_{F} -\sqrt{(\gamma_{F} + \omega)^2 + 4(\gamma_{NE}-\omega)}\bigg]\\
		\geq\ & \frac{1}{2} \bigg[ - \gamma_{F} - \sqrt{\gamma_{F} ^2 + 4\gamma_{NE}}\bigg] \\
		\geq\ & \frac{1}{2}\bigg[-\beta_{F} -  \sqrt{\beta_{F}^2 + 4\beta_{NE}} \bigg] \geq -\beta_{F} - \sqrt{\beta_{NE}}.
		 	\end{split}
		 \end{equation*}
		Combining all the previous yields:
		\[\lambda_- \begin{cases} 
			& \geq -\beta_{F} - \sqrt{\beta_{NE}}, \\[.6em]
			&  \leq -\alpha_{F}.
		 \end{cases}
			\]
			Note that this interval for $\lambda_-$ contains the interval $[-\beta_{F},-\alpha_{F}]$, which we have excluded in order to carry out the analysis.

\par Regarding the positive eigenvalues we have that:
\[\lambda_+ = \frac{1}{2}\bigg[\omega-\gamma_{F} + \sqrt{(\gamma_{F}-\omega)^2+ 4(\omega \gamma_{F} -\omega + \gamma_{NE})}\bigg] =\frac{1}{2}\bigg[\omega-\gamma_{F} + \sqrt{(\gamma_{F}+\omega)^2+ 4(\gamma_{NE}
-\omega) }\bigg].\]
\noindent We proceed by finding a lower bound for $\lambda_{+}$. To that end, we notice that $\lambda_+$ is a decreasing function with respect to the variable $\gamma_{F}$ and increasing with respect to $\gamma_{NE}$. Hence, we have that:
\begin{equation*}
\begin{split}
	\lambda_+ \geq \ & \frac{1}{2}\bigg[\omega - \beta_{F} + \sqrt{(\beta_{F} + \omega)^2 + 4(\alpha_{NE}-\omega)} \bigg]	\\
	\geq \ & \frac{1}{2}\bigg[\omega - \beta_{F} + \sqrt{(\beta_{F} + \omega)^2 + 4(1-\omega)} \bigg],\ ~~\textnormal{since } \alpha_{NE} \geq 1,\textnormal{ from Theorem \ref{theorem_LP}},\\
 \geq \ & \frac{1}{2}\bigg[ -\beta_{F} + \sqrt{\beta_{F}^2 + 4}\bigg],\ ~~\textnormal{since the previous is increasing with respect to } \omega, \\
 \geq \ & \frac{1}{1 + \beta_{F}}. \\
\end{split}
\end{equation*}
\noindent Similarly, in order to derive an upper bound for $\lambda_+$, we observe that $\lambda_+$ is an increasing function with respect to $\omega$, decreasing with respect to $\gamma_{F}$, and increasing with respect to $\gamma_{NE}$. Combining all the previous yields:
\begin{equation*}
\begin{split}
	\lambda_+ \leq \ & \frac{1}{2} \bigg[1 - \alpha_{F} + \sqrt{(\alpha_{F} + 1)^2 + 4(\beta_{NE}-1)} \bigg]
	 \le 1 + \sqrt {\beta_{NE}-1},
\end{split}
\end{equation*}
\noindent where we used the fact that $\omega \leq 1$. Then, combining all the previous gives the desired bounds, that is:		
		\[
		\lambda_+ 
		\begin{cases}  
		&	\geq \dfrac{1}{1+\beta_F} \\[.6em]
		&
	 \le 1 + \sqrt {\beta_{NE}-1},
		\end{cases} \]
\noindent and completes the proof.
	\end{proof}
	\begin{Remark}
		It is well known that a pessimistic bound on the convergence rate of MINRES can be obtained
		if the size of $I_-$ and $I_+$ are roughly the same~{\red \cite{agSIAM}}. In our case, as usually $\beta_F \ll \beta_{NE}$,
		we can assume that the length of both intervals is roughly $\sqrt{\beta_{NE}}$. 
		As a heuristic we may therefore use \textnormal{\cite[Theorem 4.14]{ESW14}}, which predicts the reduction of the residual in the $P_{AS}^{-1}$-norm in the case where both intervals have exactly equal length. This then implies that
		\[ \frac{\|r_k\|}{\|r_0\|} \le 2 \left(\frac{\kappa - 1}{\kappa+1} \right)^{\lfloor k/2 \rfloor},\]
		where 
		\begin{eqnarray*} \kappa &\approx&\frac{1+\beta_F}{\alpha_F} \left(1+\sqrt{\beta_{NE}-1}\right)
		(\beta_F + \sqrt{\beta_{NE}})
			\le  2 \kappa_F \left(\sqrt{1+\beta_{NE}}\right) (\beta_F + \sqrt{\beta_{NE}}) \\
		&\approx& 2 \beta_{NE} \cdot \kappa_F \le 2 \kappa_{NE} \cdot \kappa_F. \end{eqnarray*}
	\end{Remark}
	\begin{Remark} \label{remark: cg vs minres iterations}
		In the LP case $\tilde F_k = F_k$ and therefore $\kappa_F = 1$. It then turns out that
		$\kappa \approx 2\kappa_{NE}$.  The number of MINRES iterations is then
		driven by $ 2 \kappa_{NE}$ while the CG iterations depend on $\sqrt {\kappa_{NE}}$ \textnormal{\cite{LieTic2004}}. We highlight that different norms are used to describe the reduction in the relative residual norm for MINRES and CG.
	\end{Remark}



\section{Algorithms and Implementation Details} \label{section implementation}
\par In this section, we provide some implementation details of the method. The code was written in MATLAB and can be found here: \textbf{https://github.com/spougkakiotis/Inexact\_IP--PMM} (\href{https://github.com/spougkakiotis/Inexact_IP-PMM}{source link}).  In the rest of this manuscript, when referring to CG or MINRES, we implicitly assume that the methods are preconditioned. In particular, the preconditioner given in \eqref{normal equations preconditioner} is employed when using CG, while the preconditioner in \eqref{augmented system preconditioner} is employed when using MINRES.

\subsection{Input problem}
\noindent The method takes input problems of the following form:
\begin{equation*} 
\min_{x} \ \big( c^Tx + \frac{1}{2}x^T Q x \big), \ \ \text{s.t.}  \  Ax = b,   \ x^{I} \geq 0,\ x^{F}\ \text{free},  
\end{equation*}
\noindent where $I = \{1,...,n\} \setminus F$ is the set of indices indicating the non-negative variables. In particular, if a problem instance has only free variables, no logarithmic barrier is employed and the method reduces to a standard proximal method of multipliers.
\par In the pre-processing stage, we check if the constraint matrix is well scaled, i.e. if:
$$\Big(\max_{i \in \{1,...,m\},j \in \{1,...,n\}}(|A^{(i,j)}|) < 10\Big) \wedge \Big(\min_{i \in \{1,...,m\},j \in \{1,...,n\}:\ |A^{(i,j)}| > 0}(|A^{(i,j)}|) > 0.1\Big).$$ If the previous is not satisfied, we apply geometric scaling to the rows of $A$, that is, we multiply each row of $A$ by a scalar of the form:
$$d_i = \frac{1}{\sqrt{\max_{j \in \{1,...,n\}}(|A^{(i,:)}|) \cdot \min_{j \in \{1,...,n\}:\ |A^{(i,j)}| > 0}(|A^{(i,:)}|)}},\ \forall\ i \in \{1,...,m\}.$$

\subsection{Interior point-proximal method of multipliers} \label{section starting point}
\subsubsection{{Parametrization and the Newton system}} 
{Firstly,} in order to construct a reliable starting point for the method, we follow the developments in \cite{SIAMOpt-Meh}. To this end, we try to solve the pair of problems \eqref{LPQP}--\eqref{Dual}, ignoring the non-negativity constraints, which yields 
\begin{equation*} 
\tilde{x} = A^T(AA^T)^{-1}b,\qquad \tilde{y} = (AA^T)^{-1}A(c+Q\tilde{x}), \qquad  \tilde{z} = c - A^T \tilde{y} + Q\tilde{x}.
\end{equation*}
\noindent However, we regularize the matrix $AA^T$ and employ the preconditioned CG method to solve these systems without forming the normal equations. We use the Jacobi preconditioner to accelerate CG, i.e. $P = \text{diag}(AA^T) + \delta I_m$, where $\delta = 8$ is set as the regularization parameter. Then, in order to guarantee positivity and sufficient magnitude of $x_I,z_I$, we shift these components by some appropriate constants. These shift constants are the same as the ones used in the starting point developed in \cite{SIAMOpt-Meh}, and hence are omitted for brevity of presentation. 
\par The Newton step is computed using a predictor--corrector method. We provide the algorithmic scheme in Algorithm \ref{Algorithm Predictor-Corrector}, and the reader is referred to \cite{SIAMOpt-Meh} for a complete presentation of the method. We solve the systems \eqref{Augmented System predictor} and \eqref{Augmented System corrector}, using the proposed preconditioned iterative methods (i.e. CG or MINRES). Note that in case CG is employed, we apply it to the normal equations of each respective system. Since we restrict the maximum number of Krylov iterations, we must also check whether the solution is accurate enough. If it is not, we drop the computed directions and improve our preconditioner. If this happens for 10 consecutive iterations, the algorithm is terminated.
\renewcommand{\thealgorithm}{PC}
\begin{algorithm}[!h]
\caption{Predictor--Corrector Method}
    \label{Algorithm Predictor-Corrector}
\begin{algorithmic}
	\State Compute the predictor:
	\begin{equation} \label{Augmented System predictor}
\begin{bmatrix} 
-(Q+\Theta^{-1}+\rho_k I_n) &   A^T \\
A & \delta_k I_m
\end{bmatrix}
\begin{bmatrix}
\Delta_p x\\ 
\Delta_p y
\end{bmatrix}
= 
\begin{bmatrix}
c + Qx_k - A^T y_k -  \rho_k (x_k - \zeta_k) - d_1\\
b-Ax_k - \delta_k (y_k - \eta_k)
\end{bmatrix},
\end{equation}
\noindent where $d_1^I = -\mu_k (X^I)^{-1}e_{|I|}$ and $d_1^F = 0$ (components of $d_1$ corresponding to inequality constraints and free variables)\color{black}.
\State Retrieve $\Delta_p z$:
$$\Delta_p z^I = d_1^I - (X^I)^{-1}(Z^{I} \Delta_p x^{I}),\ \ \Delta_p z^F = 0.$$
\State Compute the step in the non-negativity orthant:
	\begin{align*} 
\alpha_x^{\max} = \min_{(\Delta_p x^{I(i)} < 0)} \bigg \{1,-\frac{x^{I(i)}}{\Delta_p x^{I(i)}}\bigg\},\ \ \alpha_z^{\max} = \min_{(\Delta_p z^{I(i)} < 0)} \bigg \{1,-\frac{z_k^{I(i)}}{\Delta_p z^{I(i)}}\bigg\},
	\end{align*} 
\noindent for $i = 1,...,|I|$, and set:
\begin{equation*}
\alpha_x = \tau \alpha_x^{\max},\ \ \alpha_z= \tau \alpha_z^{\max},
\end{equation*}
\noindent with $\tau = 0.995$ (avoid going too close to the boundary).
\State Compute a centrality measure: 
$$g_{\alpha} = (x^I+\alpha_x \Delta_p x^I)^T(z^I + \alpha_z \Delta_p z^I).$$
\State Set: $\mu = \big(\frac{g_{\alpha}}{(x^I_k)^Tz^I_k} \big)^2 \frac{g_{\alpha}}{|I|}$
\State Compute the corrector:
\begin{equation} \label{Augmented System corrector}
\begin{bmatrix} 
-(Q+\Theta^{-1}+\rho_k I_n) &   A^T \\
A & \delta_k I_m
\end{bmatrix}
\begin{bmatrix}
\Delta_c x\\ 
\Delta_c y
\end{bmatrix}
= 
\begin{bmatrix}
 d_2\\
0
\end{bmatrix},
\end{equation}
\noindent with $d^I_2 = \mu (X^I)^{-1}e^{|I|} - (X^I)^{-1}\Delta_p X^I  \Delta_p z^I$ and $d^F_2 = 0$.
\State Retrieve $\Delta_c z$:
$$\Delta_c z^I = d_2^I - (X^I)^{-1}(Z^{I} \Delta_c x^{I}),\ \ \Delta_c z^F = 0.$$
\State $$(\Delta x, \Delta y, \Delta z) = (\Delta_p x + \Delta_c x, \Delta_p y + \Delta_c y, \Delta_p z + \Delta_c z).$$
\State Compute the step in the non-negativity orthant:
\begin{equation*} 
\alpha_x^{\max} = \min_{\Delta x^{I(i)} < 0} \bigg \{1,-\frac{x^{I(i)}}{\Delta x^{I(i)}}\bigg\},\ \ \alpha_z^{\max} = \min_{\Delta z^{I(i)} < 0} \bigg \{1,-\frac{z^{I(i)}}{\Delta z^{I(i)}}\bigg\},
\end{equation*}
\noindent and set:
\begin{equation*}
\alpha_x = \tau \alpha_x^{\max},\ \ \alpha_z= \tau\alpha_z^{\max}.
\end{equation*}
\State Update: 
$$(x_{k+1},y_{k+1},z_{k+1}) = (x_k+\alpha_x\Delta x,y_k + \alpha_z \Delta y,z_k + \alpha_z \Delta z).$$
\end{algorithmic}
\end{algorithm}
\par The PMM parameters are initialized as follows: $\delta_0 = 8,\ \rho_0 = 8$, $\lambda_0 = y_0$, $\zeta_0 = x_0$. At the end of every iteration, we employ the algorithmic scheme given in Algorithm \ref{Algorithm Regularization updates}. In order to ensure numerical stability, $\delta$ and $\rho$ are not allowed to become smaller than a suitable positive threshold, $\text{reg}_{thr}$. We set $\text{reg}_{thr} = \max\big\{\frac{\text{tol}}{\max\{\|A\|^2_{\infty},\|Q\|^2_{\infty}\}},10^{-13}\big\}$. This value is based on the developments in \cite{JOTA-PouGon}, where it is shown that such a constant introduces a controlled perturbation in the eigenvalues of the non-regularized linear system. If numerical instability is detected while solving the Newton system, we increase the regularization parameters ($\delta,\ \rho$) by a factor of 2 and solve the Newton system again. If this happens while either $\delta$ or $\rho$ have reached their minimum value, we also increase this threshold. If the threshold is increased 10 times, the method is terminated with a message indicating ill-conditioning. 
\renewcommand{\thealgorithm}{PEU}
\begin{algorithm}[!h]
\caption{Penalty and Estimate Updates}
    \label{Algorithm Regularization updates}
\begin{algorithmic}[!ht]
	\State $r = \frac{|\mu_{k}-\mu_{k+1}|}{\mu_k}$ (rate of decrease of $\mu$).
  	\If {($\|Ax_{k+1} - b\| \leq 0.95 \cdot \|Ax_k - b\|$)}
  		\State $\eta_{k+1} = y_{k+1}$.
  		\State $\delta_{k+1} =(1-r) \cdot\delta_k$.
  	\Else
  		\State $\eta_{k+1} = \eta_k$.
  		\State $\delta_{k+1} = (1-\frac{1}{3}r) \cdot\delta_k$.
  	\EndIf
  	\State $\delta_{k+1} = \max \{\delta_{k+1},\text{reg}_{thr} \}$, for numerical stability (ensure quasi-definiteness).
  	\If {($\|c + Qx_{k+1} - A^Ty_{k+1} -z_{k+1}\| \leq 0.95\cdot\|c+Qx_k - A^Ty_k - z_k\|$)}
  		\State $\zeta_{k+1} = x_{k+1}$.
  		\State $\rho_{k+1} = (1-r) \cdot\rho_k$.
  	\Else
  		\State $\zeta_{k+1} = \zeta_k$.
  		\State $\rho_{k+1} = (1-\frac{1}{3}r)\cdot \rho_k$.
  	\EndIf
    \State $\rho_{k+1} = \max \{\rho_{k+1},\text{reg}_{thr}\}$.
    \State $k = k+1$.
\end{algorithmic}
\end{algorithm}
\subsubsection{Preconditioner: Low-rank updates and dynamic refinement}
\par At each IP--PMM iteration we check the number of non-zeros of the preconditioner used in the previous iteration. If this number exceeds some predefined constant (depending on the number of constraints $m$), we perform certain low-rank updates to the preconditioner, to ensure that its quality is improved, without having to use very much memory. In such a case, the following tasks are performed as sketched in Algorithm \ref{BFGS0}. Then, at every Krylov iteration, the computation of the preconditioned residual $\hat{r} = \color{cyan}P^{-1} r\color{black}$ requires the steps outlined in Algorithm \ref{BFGS1}. 

{In our implementation, the first step of Algorithm \ref{BFGS0} is performed 
using the restarted Lanczos method through the inbuilt MATLAB function  \texttt{eigs}, 
requesting $1$-digit accurate eigenpairs.
This requires and additional number of applications of the preconditioned $M_{NE}$ matrix within
\texttt{eigs}. 
To save on this cost we employ the first 5 Lanczos iterations to assess the order of magnitude
of the largest eigenvalue $\lambda_{\max}(P_{NE}^{-1} M_{NE}) $. If  $\lambda_{\max} < 100$
we assume that a condition number of the preconditioned matrix less than 100 must not be
further reduced, and we stop computing eigenvalues, otherwise we proceed.  
The number of matrix-vector prodicts required
by \texttt{eigs} can not be known in advance. However, fast Lanczos convergence is expected when 
the largest eigenvalues are well separated which in turn will provide a notable reduction 
of the condition number of the preconditioned matrices. This extra cost is payed for by: (a) 
a decreased number of PCG iterations in both the predictor and corrector steps; (b) 
an improved conditioning of the linear system; (c) a saving in the density of the Cholesky
factor at the subsequent IP iteration, since, as it will explained at the end of this Section 4.2.2,
 fast PCG convergence at a given IP step will cause a sparsification of $P_{NE}$ at the next
 outer iteration.
 We finally remark that a good approximation of the largest eigenvalues of the preconditioned
 matrix could be extracted for free \cite{BFMPJcam17} during the PCG solution of the correction linear system
 and used them to accelerate the predictor linear system by the low-rank correction. This approach,
 not implemented in the present version of the code,
 would save on the cost of computing eigenpairs but would provide acceleration in the second
 linear system only.}

{
The cost of computing $Z$ is equal to $p$ matrix-vector products with matrix $M_{NE}$. Then, $T$ is computed in $O(p m)$ operations, while $\Pi$ is computed via an LU decomposition, which costs $O(p^3)$ operations. All the previous need to be calculated once before employing the Krylov subspace method. Algorithm \ref{BFGS1} introduces an additional $O(4pm)$ cost per iteration of the Krylov subspace method (notice that a similar computation as in the third step of Algorithm \ref{BFGS1} is required even without enabling low-rank updates).} {The memory requirements of Algorithms \ref{BFGS0}, \ref{BFGS1} are of the order $O(pm)$.}
\renewcommand{\thealgorithm}{LRU-0}
\begin{algorithm}[h!]
	\caption{Low-Rank Updates-0: Before the Krylov Solver Iteration}
	\label{BFGS0}
\begin{algorithmic}
	\State  Compute the $p$ rightmost (approximate) eigenvectors $v_m, \ldots, v_{m-p+1}$ 
	of  $M_{NE} v = \lambda P_{NE} v$.
	\State  Set $V = \begin{bmatrix} v_m & \ldots &  v_{m-p+1} \end{bmatrix}$
		\State  Compute             $  Z = M_{NE} V; \ T = V^T Z; \ \Pi = T^{-1}$.

\end{algorithmic}
\end{algorithm}

\vspace{-1cm}
\renewcommand{\thealgorithm}{LRU-1}

\begin{algorithm}[h!]
	\caption{Low-Rank Updates-1: Computation of $\hat r = \color{cyan}P^{-1} r\color{black}$}	\label{BFGS1}

\begin{algorithmic}
	\State  $ w = \Pi (V^T r)$.
	\State  $ z = r - Z w$.
	\State  Solve $ P_{NE} t = z $.
	\State  $ u = \Pi (Z^T t)$.
	\State  $ \hat{r} = V ( \nu w - u) + t$.
\end{algorithmic}
\end{algorithm}

\par In Section \ref{Spectral Analysis}, we showed that the quality of both preconditioners in \eqref{augmented system preconditioner} and \eqref{normal equations preconditioner} depends  heavily on the quality of the approximation of the normal equations. In other words, the quality of the preconditioner for the normal equations in \eqref{normal equations preconditioner} governs the convergence of both MINRES and CG. In turn, we know from Theorem \ref{theorem_LP}, that the quality of this preconditioner depends on the choice of the constant $C_{E,k}$, at every iteration of the method. By combining the previous with the definition of $E$ in \eqref{defE}, we expect that as $C_{E,k}$ decreases (which potentially means that there are fewer zero diagonal elements in $E$), the quality of $P_{NE,k}$ is improved. Hence, we control the quality of this preconditioner, by adjusting the value of $C_{E,k}$.
\par  More specifically, the required quality of the preconditioner depends on the quality of the preconditioner at the previous iteration, as well as on the required memory of the previous preconditioner. In particular, if the Krylov method converged fast in the previous IP--PMM iteration (compared to the maximum allowed number of Krylov iterations), while requiring a substantial amount of memory, then the preconditioner quality is lowered (i.e. $C_{E,k+1} > C_{E,k}$). Similarly, if the Krylov method converged slowly, the preconditioner quality is increased (i.e. $C_{E,k} > C_{E,k+1}$). If the number of non-zeros of the preconditioner is more than a predefined large constant (depending on the available memory), and the preconditioner is still not good enough, we further increase the preconditioner's quality (i.e. we decrease $C_{E,k}$), but at a very slow rate, hoping that this happens close to convergence (which is what we observe in practice, when solving large scale problems). As a consequence, allowing more iterations for the Krylov solvers results in a (usually) slower method that requires less memory. On the other hand, by sensibly restricting the maximum number of iterations of the iterative solvers, one can achieve fast convergence, at the expense of robustness (the method is slightly more prone to inaccuracy and could potentially require more memory).

 \subsubsection{Termination criteria}
\par The termination criteria of the method are summarized in Algorithm \ref{Algorithm Termination Criteria}. In particular, the method successfully terminates if the scaled 2-norm of the primal and dual infeasibility, as well as the complementarity, are less than a specified tolerance. The following two conditions in Algorithm \ref{Algorithm Termination Criteria} are employed to detect whether the problem under consideration is infeasible. For a theoretical justification of these conditions, the reader is referred to \cite{arxiv-PouGon}. If none of the above happens, the algorithm terminates after a pre-specified number of iterations. 

\renewcommand{\thealgorithm}{TC}
\begin{algorithm}[!h]
\caption{Termination Criteria}\label{Algorithm Termination Criteria} 

\noindent \textbf{Input:} $k$, $\text{tol}$, $\text{maximum iterations}$
\begin{algorithmic} 
	\If {$\big(\frac{\| c -A^T y + Qx - z \|}{\max\{\|c\|,1\}} \leq \text{tol}\big)$ $\wedge$ $\big(\frac{\|b - Ax\|}{\max\{\|b\|,1\}} \leq \text{tol}\big)$ $\wedge$ $\big(\mu \leq \text{tol}\big)$}
		\State Declare convergence.
	\EndIf
	\If {$\big(\|c + Qx_k - A^T y_k - z_k + \rho_k(x_k - \zeta_k)\| \leq \text{tol}\big)\  \wedge\ \big(\|x_k - \zeta_k\| > 10^{10}\big)$}
		\If {($\zeta_k$ not updated for 5 consecutive iterations)} 
			\State Declare infeasibility.
		\EndIf	
	\EndIf
	\If {$\big(\|b-Ax_k - \delta_k(y_k-\eta_k)\| \leq \text{tol}\big)\ \wedge\ \big(\|y_k - \eta_k\|> 10^{10}\big)$}
		\If {($\eta_k$ not updated for 5 consecutive iterations)}
			\State Declare infeasibility.
		\EndIf
	\EndIf

	\If {$(k > \text{iterations limit})$}
		\State Exit (non-optimal).
	\EndIf
\end{algorithmic}
\end{algorithm}

\section{Numerical Results} \label{Section Numerical Results}
\par At this point, we present computational results obtained by solving a set of small to large scale linear and convex quadratic problems.  Throughout all of the presented experiments, we set the maximum number of IP--PMM iterations to $200$.  The experiments were conducted on a PC with a 2.2GHz Intel Core i7 processor (hexa-core), 16GB RAM, run under Windows 10 operating system. The MATLAB version used was R2019a. For the rest of this section, the reported number of non-zeros of a constraint matrix of an arbitrary problem does not include possible extra entries created to transform the problem to the IP--PMM format.
\par Firstly, we run the method on the Netlib collection \cite{Netlib}. The test set consists of 96 linear programming problems. We set the desired tolerance to $\text{tol} = 10^{-4}$. In Table \ref{Netlib medium instances}, we collect statistics from the runs of the method over some medium scale instances of the Netlib test set (see \cite{Netlib}).  For each problem, two runs are presented; in the first one, we solve the normal equations of systems \eqref{Augmented System predictor}--\eqref{Augmented System corrector} using CG, while in the second one, we solve \eqref{Augmented System predictor}--\eqref{Augmented System corrector} using MINRES.  As we argued in Section \ref{Spectral Analysis}, the MINRES can require more than twice as many iterations as CG to deliver an equally good direction. Hence, we set $\text{maxit}_{\text{MINRES}} = 3 \cdot \text{maxit}_{\text{CG}} = 300$ (i.e. $\text{maxit}_{\text{CG}} = 100$). {As we already mentioned in Remark \ref{remark: cg vs minres iterations}, it is not entirely clear how many more iterations MINRES requires to guarantee the same quality of solution as PCG, since the two algorithms optimize different residual norms. Hence, requiring three times more iterations for MINRES is based on the behavior we observed through numerical experimentation.} It comes as no surprise that IP--PMM with MINRES is slower, however, it allows us to solve general convex quadratic problems for which the normal equations are too expensive to be formed, or applied to a vector ({indeed, this would often require the inversion of the matrix $Q + \Theta^{-1} + \rho_k I_n$, which is possibly non-diagonal}). 
More specifically, IP--PMM with CG solved the whole set successfully in 141.25 seconds, requiring 2,907 IP--PMM iterations and 101,382 CG iterations. Furthermore, IP--PMM with MINRES also solved the whole set successfully, requiring 341.23 seconds, 3,012 total IP--PMM iterations and 297,041 MINRES iterations.
\begin{table}[!h]
\centering
\caption{Medium Scale Linear Programming Problems\label{Netlib medium instances}}
\begin{tabular}{@{\extracolsep{4pt}}lrrrrrrrr@{}}
    \toprule
\multirow{2}{*}{\textbf{Name}}            & \multirow{2}{*}{$\bm{\textbf{nnz}(A)}$}            & \multicolumn{3}{c}{{\textbf{IP--PMM: CG}}}   & \multicolumn{3}{c}{\textbf{IP--PMM: MINRES}}\\  \cline{3-5}  \cline{6-8}
& &{Time (s)} & {IP-Iter.} & {CG-Iter.} & {Time (s)}& {IP-Iter.} & {MR-Iter.}\\ \midrule
80BAU3B &   $29,063 $ & 3.15 & 48 & 1,886 & 10.74 & 47 & 4,883 \\
D2Q06C &  $35,674$ & 2.16 & 42 & 1,562 & 7.48 & 46 & 5,080 \\
D6CUBE &  $43,888$ & 0.97 & 30 & 933 & 3.26 & 30 & 3,279 \\
DFL001 & $41,873$ & 10.18 & 54 & 2,105 & 29.07 & 54 & 6,292\\ 
FIT2D & $138,018$ & 3.16 & 28 & 836 & 10.62 & 28 & 2,558\\ 
FIT2P & $60,784$ & 40.78 & 31 & 924 & 65.15 & 31 & 2,978  \\ 
PILOT87 &  $73,804$ & 7.29 & 40 & 1,260 & 18.36 & 42 & 3,543\\
QAP12 & $44,244$ & 4.38 & 14 & 495 & 8.62 & 14 & 1,465 \\
QAP15 & $110,700$ & 22.83 & 18 & 575 & 47.45 & 18 & 1,808\\ \bottomrule
\end{tabular}
\end{table}
 
\par While we previously presented the runs of IP--PMM using MINRES over the Netlib collection, we did so only to compare the two variants. In particular, for the rest of this section we employ the convention that IP--PMM uses CG whenever $Q = 0$ or $Q$ is diagonal, and MINRES whenever this is not the case. Next, we present the runs of the method over the Maros--M\'esz\'aros test set \cite{OMS-MarMes}, which is comprised of 127 convex quadratic programming problems. In Table \ref{QP medium instances}, we collect statistics from the runs of the method over some medium and large scale instances of the collection.
\begin{table}[!h]
\centering
\caption{Medium and Large Scale Quadratic Programming Problems\label{QP medium instances}}
\begin{tabular}{@{\extracolsep{10pt}}lrrrrr@{}}
    \toprule
\multirow{2}{*}{\textbf{Name}}            & \multirow{2}{*}{$\bm{\textbf{nnz}(A)}$}            & \multirow{2}{*}{$\bm{\textbf{nnz}(Q)}$}           &\multicolumn{3}{c}{{\textbf{IP--PMM}}}   \\  \cline{4-6}  
& & &  {Time (s)}& {IP-Iter.} & {Krylov-Iter.}\\ \midrule
AUG2DCQP &20,200 & 80,400 & 4.46 & 41 & 1,188\\
CONT-100 & 49,005 & 10,197 &   3.95   & 23   & 68      \\
CONT-101 & 49,599 & 2,700 &   8.83   & 85   & 282      \\
CONT-200 & 198,005 & 40,397 &    39.84  & 109   & 422     \\
CONT-300 & 448,799 & 23,100 &    134,76  & 126   & 405     \\
CVXQP1\_L & 14,998 & 69,968 &54.77 &111 & 12,565  \\
CVXQP3\_L & 22,497 & 69,968 & 80.18 & 122 & 14,343 \\
LISWET1 & 30,000 &  10,002 & 3.55 & 41 & 1,249 \\
POWELL20 & 20,000 & 10,000 & 2.71 & 31 & 937\\ 
QSHIP12L & 16,170 & 122,433 & 2.99 & 26 & 3,312\\  \bottomrule
\end{tabular}
\end{table}

\begin{table}[!h]
\centering
\caption{Robustness of Inexact IP--PMM\label{Table robustness}}
\begin{tabular}{@{\extracolsep{4pt}}llrrrr@{}}
    \toprule
\multirow{2}{*}{\textbf{Collection}}            & \multirow{2}{*}{\textbf{Tol}}            & \multirow{2}{*}{\textbf{Solved (\%)}}           &\multicolumn{3}{c}{{\textbf{IP--PMM}}}   \\  \cline{4-6}  
& &  & {Time (s)}& {IP-Iter.} & {Krylov-Iter.}\\ \midrule
	Netlib & $10^{-4}$ & 100\phantom{.00}\% & 141.25 & 2,907 & 101,482\\
	Netlib & $10^{-6}$ & 100\phantom{.00}\%  &183.31 & 3,083 & 107,911  \\
Netlib & $10^{-8}$ & 96.87 \% &337.21 & 3,670 & 119,465  \\
Maros--M\'esz\'aros & $10^{-4}$ & 99.21 \%  & 422.75 & 3,429 & 247,724\\
Maros--M\'esz\'aros  & $10^{-6}$ & 97.64 \%  & 545.26 & 4,856 & 291,286\\
Maros--M\'esz\'aros & $10^{-8}$ &92.91 \%  & 637.35 & 5,469 & 321,636 \\ \bottomrule 
\end{tabular}
\end{table}
\begin{table}[h!]
\centering
\begin{threeparttable}

\caption{Large-Scale Linear Programming Problems\label{LP large instances}}
\begin{tabular}{@{\extracolsep{14pt}}lrrrrr@{}}
    \toprule
\multirow{2}{*}{\textbf{Name}}            & \multirow{2}{*}{$\bm{\textbf{nnz}(A)}$}            & \multicolumn{3}{c}{{\textbf{IP--PMM: CG}}}   \\  \cline{3-5}  
& & {Time (s)}& {IP-Iter.} & {CG-Iter.}\\ \midrule
CONT1-l & $7,031,999$ & $\ast$\tnote{1} & $\ast$ & $\ast$\\
FOME13 &   $285,056 $ & 72.59 & 54 & 2,098  \\
FOME21 &   $465,294 $ & 415.51 & 96 & 4,268  \\
LP-CRE-B &  $260,785$ & 14.25 & 51 & 2,177  \\
LP-CRE-D &   $246,614 $ & 16.04 & 58 & 2,516  \\
LP-KEN-18&   $358,171 $ & 128.78 & 42 & 1,759  \\
LP-OSA-30 &   $604,488 $ & 20.88 & 67 & 2,409  \\
LP-OSA-60 &   $1,408,073$ & 56.65 & 65 & 2,403  \\
LP-NUG-20&   $304,800$ & 132.41 & 17 & 785  \\
LP-NUG-30&   $1,567,800$ & 2,873.67 & 22 & 1,141  \\
LP-PDS-30&   $340,635$ & 363.89 & 81 & 3,362  \\
LP-PDS-100&   $1,096,002 $ & 3,709.93 & 100 & 6,094  \\
LP-STOCFOR3 &  $43,888$ & 8.96 & 60 & 1,777\\
NEOS & $1,526,794$ & $\dagger$\tnote{2} & $\dagger$ & $\dagger$\\ 
NUG08-3rd & $148,416$ & 80.72 & 17 & 682 \\ 
RAIL2586 & $8,011,362$ & 294.12 & 51 & 1,691  \\ 
RAIL4284 &  $11,284,032$ & 391.93 & 46 & 1,567\\
WATSON-1 & $1,055,093$ & 181.63 & 73 & 2,588 \\ 
WATSON-2 &   $1,846,391$ & 612.68 & 140 & 5,637  \\ \bottomrule
\end{tabular}
\begin{tablenotes}
\item[1] $\ast$ indicates that the solver was stopped due to excessive run time.
\item[2] $\dagger$ indicates that the solver ran out of memory.
\end{tablenotes}
\end{threeparttable}
\end{table}
\par In Table \ref{Table robustness} we collect the statistics of the runs of the method over the entire Netlib and Maros-M\'esz\'aros test sets. In particular, we solve each set with increasing accuracy and report the overall success rate of the method, the total time, as well as the total IP--PMM and Krylov iterations. All previous experiments demonstrate that IP--PMM with the proposed preconditioning strategy inherits the reliability of IP--PMM with a direct approach (factorization) \cite{arxiv-PouGon}, while allowing one to control the memory  and processing requirements of the method (which is not the case when employing a factorization to solve the resulting Newton systems). Most of the previous experiments were conducted on small to medium scale linear and convex quadratic programming problems. In Table \ref{LP large instances} we provide the statistics of the runs of the method over a small set of large scale problems. The tolerance used in these experiments was $10^{-4}$.

\par We notice that the proposed version of IP--PMM is able to solve larger problems, as compared to IP--PMM using factorization (see \cite{arxiv-PouGon}, and notice that the experiments there were conducted on the same PC, using the same version of MATLAB). {To summarize the comparison of the two approaches, we include Figure \ref{figure LP perf. prof.}. It contains the performance profiles of the two methods, over the 26 largest linear programming problems of the QAPLIB, Kennington, Mittelmann, and Netlib libraries, for which at least one of the two methods was terminated successfully. In particular, in Figure \ref{subfigure LP perf. prof. time} we present the performance profiles with respect to time, while in Figure \ref{subfigure LP perf. prof. iter} we show the performance profiles with respect to the number of IPM iterations. IP--PMM with factorization is represented by the green line (consisting	of triangles), while IP--PMM with PCG is represented by the blue line (consisting of stars). In both figures, the horizontal axis is in logarithmic scale, and represents the ratio with respect to the best performance achieved by one of the two methods, for every problem. The vertical axis shows the percentage of problems solved by each method, for different values of the performance ratio. Robustness is ``measured" by the maximum attainable percentage, with efficiency measured by the rate of increase of each of the lines (faster increase indicates better efficiency). We refer the reader to \cite{MP-DolMor} for a complete review of this benchmarking approach. As one can observe, IP--PMM with factorization was able to solve only 84.6\% of these problems, due to excessive memory requirements (namely, problems LP-OSA-60, LP-PDS-100, RAIL4284, LP-NUG-30 were not solved due to insufficient memory). As expected, however, it converges in fewer iterations for most problems that are solved successfully by both methods. Moreover, 
IP--PMM with PCG is able to solve every problem that is successfully solved by IP--PMM with factorization. Furthermore, it manages to do so requiring significantly less time, which can be observed in Figure \ref{subfigure LP perf. prof. time}. Notice that we restrict the comparison to only large-scale problems, since this is the case of interest. In particular, IP--PMM with factorization is expected to be more efficient for solving small to medium scale problems. }
\begin{figure}
\centering
\caption{Performance profiles for large-scale linear programming problems} \label{figure LP perf. prof.}
\begin{subfigure}[!ht]{0.48\textwidth}
	\includegraphics[width=\textwidth]{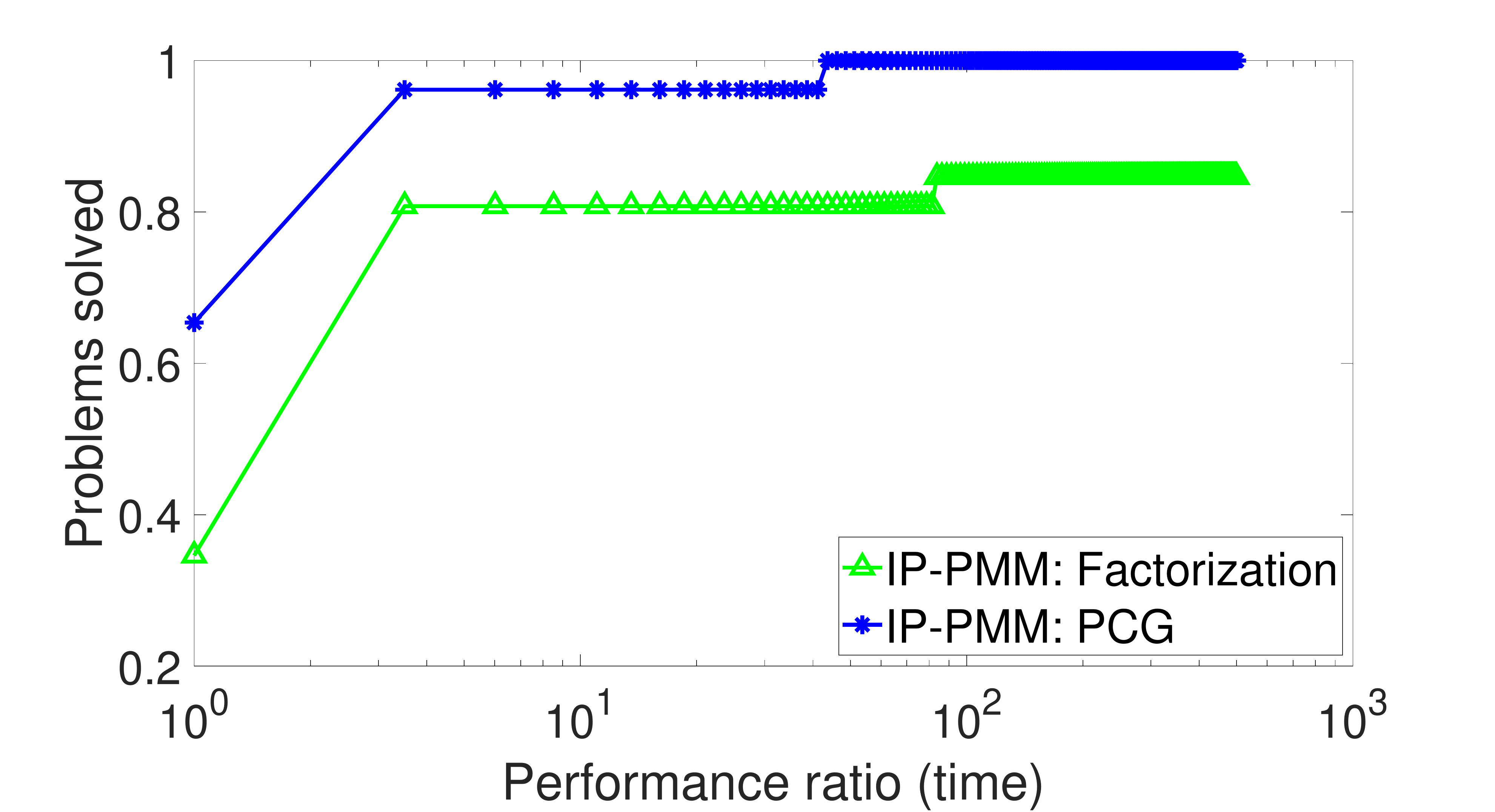}
	\caption{Performance profile in terms of CPU time}
	\label{subfigure LP perf. prof. time}
\end{subfigure}
\quad
\begin{subfigure}[!ht]{0.48\textwidth}
	\includegraphics[width =\textwidth]{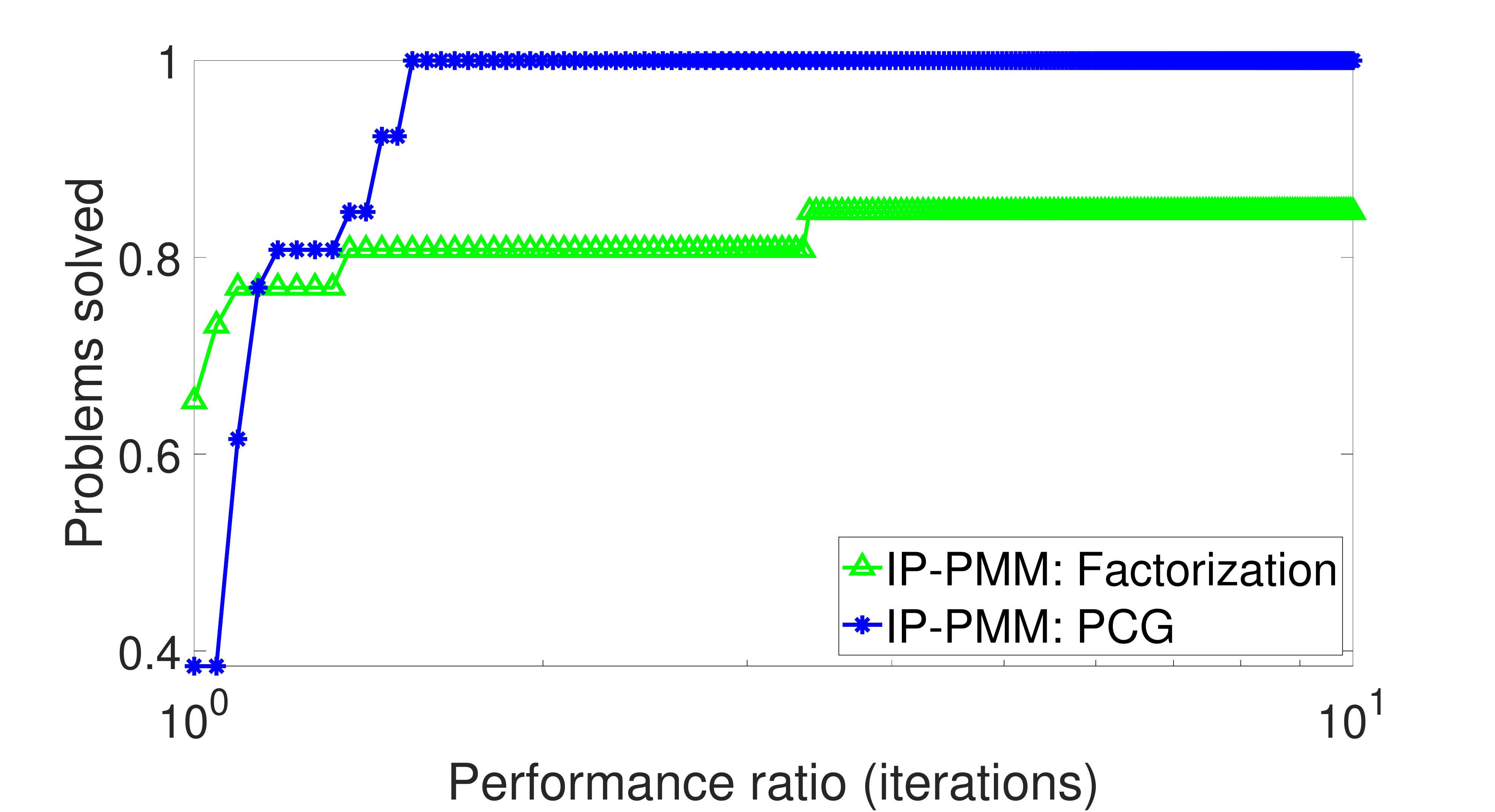}
	\caption{Performance profile in terms of iterations}
	\label{subfigure LP perf. prof. iter}
\end{subfigure}
\end{figure}

{Finally, in order to clarify the use of the low-rank (LR) updates we conducted an analysis on
two  specific -- yet representative -- linear systems, at (predictor and corrector) IP step \#12 for problem \texttt{nug20}.
In Table \ref{nug20} we report the results in solving these linear systems with the low-rank strategy and 
different accuracy/number of eigenpairs (LR($p, \texttt{tol}$) meaning  that we approximate $p$ 
eigenpairs with \texttt{eigs} with a tolerance \texttt{tol}). The best choice, using $p=10$ and  $0.1$ 
accuracy, improves the $P_{NE}$ preconditioner both in terms of linear iterations
and total CPU time.
\begin{table}[h!]
\begin{center}
	\begin{tabular}{lr|rr|rr|r}
	& & 		\multicolumn{2}{|c}{predictor} &	\multicolumn{2}{|c|}{corrector} &\\
	& CPU(\texttt{eigs}) & its & CPU & its & CPU & CPU tot\\
		\hline
		No tuning &       & 95 & 10.71 &  95 & 11.10 & 21.81 \\
		LR ($5,0.1$) &2.39 & 79 & 9.51  & 78 &  9.59& 21.49 \\ 
		LR ($10,0.1$) &3.00 & 69 & 8.14  & 67 & 7.64 & 18.78 \\ 
		LR ($20,0.1$) &5.98 & 64 & 7.79  & 63 & 7.85 & 22.62 \\ 
		LR ($20,10^{-3}$) & 9.59 & 64 & 7.79  & 63 & 7.85 & 26.23 \\ 
\end{tabular}
	\caption{CPU times and number of linear iterations for the various preconditioners at IP iteration \#12 for problem \texttt{nug20}.}
	\label{nug20}
\end{center}
\end{table}
}

{
\noindent
Figure \ref{nug20FIG} accounts for 
the steepest convergence profile of the preconditioned-with-tuning normal equations matrix,
when using the optimal parameters.
\begin{figure}[h!]
	\centerline{\includegraphics[width=8cm]{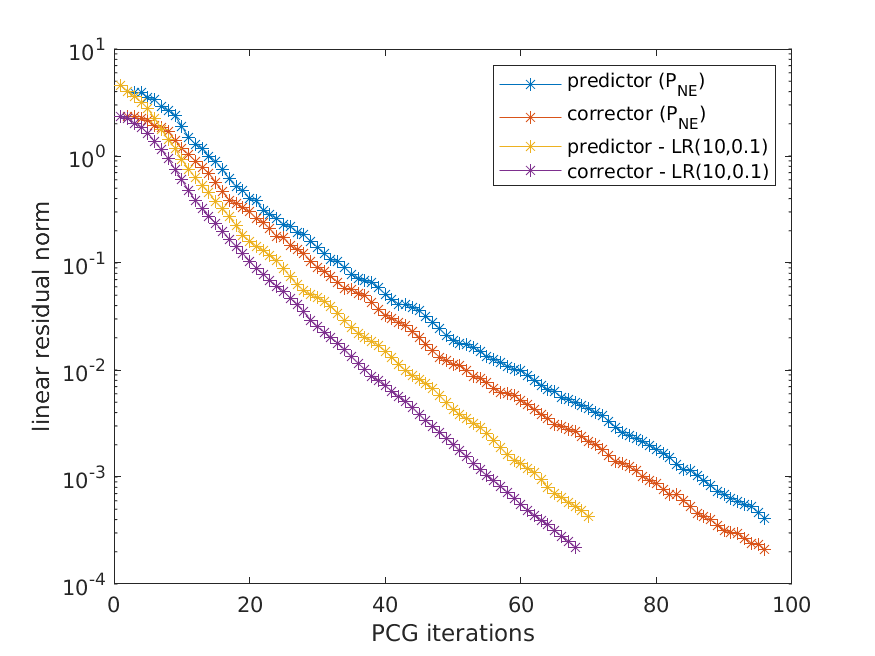}}
	\caption{Convergence profiles of PCG accelerated with $P_{NE}$ and $P_{NE}$ 
	updated with LR($10,0.1$). Linear systems at IP iteration \#12  for problem
	\texttt{nug20}.}
	\label{nug20FIG}
\end{figure}
}
\section{Concluding Remarks}\label{Conclusion}

In this paper, we have considered a combination of the interior point 
method and the proximal method
of multipliers to efficiently
solve linear and quadratic programming problems of large size. The
combined method, in short IP--PMM, produces
a sequence of linear systems whose conditioning progressively
deteriorates as the iteration proceeds.
One main contribution of this paper is the development and analysis of
a novel preconditioning technique
for both the normal equations system arising in LP and separable QP
problems, and the augmented system for general QP instances.
The preconditioning strategy consists of the construction of symmetric
positive definite, block-diagonal preconditioners for the augmented
system or a suitable approximation of the normal equations coefficient
matrix, by undertaking sparsification of the (1,1) block with the aim of controlling
the memory requirements and computational cost of the method.
We have carried out a detailed spectral analysis of the resulting
preconditioned matrix systems. In particular, we have shown that the
spectrum of the preconditioned normal equations is independent of the
logarithmic barrier parameter in the LP and separable QP cases, which is a highly
desirable property for preconditioned systems arising from IPMs. 
We have then made use of this result to obtain a spectral analysis of 
preconditioned matrix systems arising from more general QP problems.

We have reported computational results obtained by solving a set of
small to large linear and convex quadratic problems from the Netlib
and Maros--M\'esz\'aros collections, and also large-scale linear
programming problems. The experiments demonstrate that
the new solver, in conjunction with
the proposed preconditioned iterative methods, leads to rapid and
robust convergence for a wide class of problems.  We hope that this
work provides a first step towards the construction of generalizable
preconditioners for linear and quadratic programming problems.

\subsection*{Acknowledgements}
The authors express their gratitude to the reviewers for their valuable comments. 
This work was partially supported by the Project granted by the CARIPARO
foundation {\em Matrix-Free Preconditioners for Large-Scale Convex
Constrained Optimization Problems (PRECOOP)}. L. Bergamaschi and A.
Mart\'{\i}nez were also supported by the INdAM-GNCS Project (Year
2019), while J. Gondzio and S. Pougkakiotis were also supported by the
Google project {\em Fast $(1+x)$-order Methods for Linear Programming}.
We wish to remark that
this study does
not have any conflict of interest to disclose.

\setstretch{1}

\end{document}